\newtheorem{theorem}{Theorem}[section]
\newtheorem{lemma}[theorem]{\bf Lemma}
\newtheorem{prop}[theorem]{Proposition}
\newtheorem{definition}[theorem]{Definition}
\newtheorem{remark}[theorem]{\textbf{Remark}}
\newcommand{\N}{\mathbb{N}}
\newcommand{\Z}{\mathbb{Z}}
\newcommand{\R}{\mathbb{R}}
\newcommand{\D}{\mathcal{D}}
\newcommand{\calS}{\mathcal{S}}
\DeclareMathOperator{\supp}{supp}
\newcommand{\eM}{M_\epsilon}
\newcommand{\bs}{\backslash}
\newcommand{\bk}{\backslash}
\def\Xint#1{\mathchoice
   {\XXint\displaystyle\textstyle{#1}}%
   {\XXint\textstyle\scriptstyle{#1}}%
   {\XXint\scriptstyle\scriptscriptstyle{#1}}%
   {\XXint\scriptscriptstyle\scriptscriptstyle{#1}}%
   \!\int}
\def\XXint#1#2#3{{\setbox0=\hbox{$#1{#2#3}{\int}$}
     \vcenter{\hbox{$#2#3$}}\kern-.5\wd0}}
\def\dashint{\Xint-}
\newcommand{\pp}{{p(\cdot)}}
\newcommand{\Pp}{\mathcal{P}}
\newcommand{\Lpp}{L^\pp}
\newcommand{\cpp}{{p'(\cdot)}}
\newcommand{\Lcpp}{L^\cpp}
\begin{document}

\title{The $\epsilon$-Maximal Operator and Haar Multipliers On Variable Lebesgue Spaces}

\author{David Cruz-Uribe}

\address{Department of Mathematics, University of Alabama, Tuscaloosa, AL 35487, USA}

\author{Michael Penrod}

\address{Department of Mathematics, University of Alabama, Tuscaloosa, AL 35487, USA}

\thanks{We thank Cody Stockdale for sharing his intuition and advice on $\epsilon$-sparse operators.}

\subjclass{42B35,42B25,42A45}

\keywords{variable Lebesgue spaces, exponent functions, maximal operators, Haar multiplier}

\date{\today}

\begin{abstract}
C. Stockdale, P. Villarroya, and B. Wick introduced the $\epsilon$-maximal operator to prove the Haar multiplier is bounded on the weighted spaces $L^p(w)$ for a class of weights larger than $A_p$. We prove the $\epsilon$-maximal operator and Haar multiplier are bounded on variable Lebesgue spaces $\Lpp(\R^n)$ for a larger collection of exponent functions than the log-Holder continuous functions used to prove the boundedness of the maximal operator on $\Lpp(\R^n)$. We also prove that the Haar multiplier is compact when restricted to a dyadic cube $Q_0$. 
\end{abstract}
\maketitle
\section{Introduction}
\label{sec:Introduction}
In \cite{VLS}, it was proved that the dyadic maximal operator $M^d$ is bounded on variable Lebesgue spaces $\Lpp(\R^n)$ for log-Holder continuous exponent functions $\pp\in LH(\R^n)$ with $1 < p_- \leq p_+ < \infty$. In \cite{SVW2021}, the $\epsilon$-maximal operator and $\epsilon$-sparse operator were introduced to establish boundedness for the Haar multiplier on $L^p(w)$ for a class of weights larger than $A_p$.  Motivated by these two results, we prove the $\epsilon$-maximal operator and the Haar multiplier are bounded on variable Lebesgue spaces for a collection of exponent functions larger than $LH(\R^n)$. In addition, we prove a local compactness result for the Haar multiplier similar to the result in \cite{SVW2021}. 

Before stating our results precisely, we briefly explain some of the definitions involved. These are stated in more detail in Section \ref{sec:Preliminaries}. An exponent function is a Lebesgue measurable function $\pp : \R^n \to [1,\infty)$. We denote the essential infimum and supremum of $\pp$ by $p_-$ and $p_+$. In this paper, we only consider exponent functions where $1< p_- \leq p_+ <\infty$. Given such an exponent function $\pp$, we can define the variable Lebesgue space $\Lpp(\R^n)$ as the collection of Lebesgue measurable functions satisfying $\|f\|_{\pp}<\infty$

We denote the set of all dyadic cubes in $\R^n$ by $\D$. In all our definitions and results, $\epsilon = \{\epsilon_Q\}_{Q\in \D}$ is a bounded collection of real numbers indexed by dyadic cubes $Q\in \D$ such that for any $P, Q \in \D$, if $P \subseteq Q$, then $\epsilon_P \leq \epsilon_Q$. We refer to this assumption as the domination property of $\epsilon$. Given such a collection, we define the Haar multiplier $T_\epsilon$ for all $f \in L^1_{\text{loc}}(\R^n)$ by
\begin{align}\label{def:HaarMult}
T_\epsilon f = \sum_{Q \in \D} \epsilon_Q \langle f, h_Q\rangle h_Q,
\end{align}
Here, $\langle f, h_Q\rangle = \int_Q f(y)h_Q(y) dy$, and $h_Q$ is the Haar function adapted to $Q$ defined by
\[ h_Q = |Q|^{-1/2} \left( \chi_Q - \frac{1}{2^n} \chi_{\widehat{Q}}\right),\]
where $\widehat{Q}$ is the dyadic parent of $Q$. We will prove that the Haar multiplier is bounded on $\Lpp(\R^n)$ under certain assumptions on $\pp$. To do so, we will use the $\epsilon$-maximal operator as a tool to control the Haar multiplier. The $\epsilon$-maximal operator $\eM$ is defined by
\begin{align}\label{def:EpsMaxOp}
\eM f(x) = \sup_{Q\in \D} \epsilon_Q \dashint_Q |f(y)| dy\; \chi_Q(x).
\end{align}
It is known that the dyadic maximal operator $M^d$ is bounded on $\Lpp(\R^n)$ when $\pp \in LH_0(\R^n)\cap LH_\infty(\R^n)$. The set $LH_\infty(\R^n)$ denotes the collection of exponent functions $\pp$ that are log-Holder continuous at infinity, i.e., there exists constants $C_\infty$ and $p_\infty$ such that for all $x \in \R^n$,
\begin{align}\label{LHinfty}
|p(x)-p_\infty| \leq \frac{C_\infty}{\log(e+|x|)}.
\end{align}
The set $LH_0(\R^n)$ consists of exponent functions that are locally log-Holder continuous, meaning there exists a constant $C_0$ such that for all $x,y \in \R^n$ with $|x-y|<1/2$, we have
\begin{align}\label{LH0}
|p(x) - p(y)| \leq \frac{C_0}{-\log(|x-y|)}. 
\end{align}
When $p_+<\infty$, $LH_0(\R^n)$ is equivalent to the Diening condition: there exists a constant $C$ depending on $n$ such that given any cube $Q$, 
\begin{align}\label{DieningCondition}
|Q|^{p_-(Q) - p_+(Q)} \leq C.
\end{align}

The Diening condition can be used to give a local condition on exponent functions that is adapted to dyadic operators. In the study of Hardy martingale spaces with variable exponents, the authors of \cite{JiaoMartingales} used the following condition to prove a strong-type Doob maximal inequality:
\[\mathbb{P}(A)^{p_-(A)-p_+(A)} \leq K,\]
where $K$ is a constant depending on the exponent function $\pp$, $\mathbb{P}$ is a probability measure, and $A$ is a measurable set. In \cite{WeiszDyadicBoundCondition}, a similar inequality given by
\begin{align}
|I|^{p_-(I) - p_+(I)} \leq C,\label{Weisz:ineq}
\end{align}
where $I$ is a dyadic interval in $[0,1]$, was used to prove boundedness of some other types of maximal operators for variable exponent spaces.

Note that $\eM f(x) \leq \|\epsilon\|_\infty M^d f(x)$ where $\|\epsilon\|_\infty = \sup_{Q\in \D} \epsilon_Q$.  Consequently, if $\pp \in LH_0(\R^n)\cap LH_\infty(\R^n)$, then $\eM$ is bounded on $\Lpp(\R^n)$. However, we can replace $LH_0(\R^n)$ with the weaker local condition $\epsilon LH_0(\R^n)$.
 
The set $\epsilon LH_0(\R^n)$ consists of exponent functions $\pp$ satisfying the $\epsilon$-Diening condition
\begin{align}\label{EpsLH0}
 \left( \frac{|Q|}{\epsilon_Q}\right)^{p_-(Q)-p_+(Q)} \leq C,
\end{align}
for all $Q \in \D$, where $C$ depends only on $n$ and $\pp$. Note that if $\epsilon_Q=1$ for all $Q\in \D$, we obtain the Diening condition \eqref{DieningCondition}, discussed earlier. However, the $\epsilon$-Diening condition \eqref{EpsLH0} is a dyadic condition that is also adapted to the collection $\epsilon$. This makes $\epsilon LH_0(\R^n)$ a natural condition to use to prove the $\epsilon$-maximal operator is bounded on $\Lpp(\R^n)$.
\begin{theorem}\label{EpsMaxOpBdd}
Given $\epsilon = \{\epsilon_Q\}_{Q\in \D}$, let $\pp \in LH_\infty(\R^n)\cap \epsilon LH_0(\R^n)$ with $1 < p_- \leq p_+ < \infty$. Then $\eM$ is bounded on $\Lpp(\R^n)$: i.e. there exists a constant $C=C(n,\pp, \epsilon)$ such that for any $f \in \Lpp(\R^n)$, 
\[ \|\eM f\|_{\pp} \leq C\|f\|_{\pp}.\]
\end{theorem}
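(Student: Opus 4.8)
The plan is to reduce the norm inequality to a single modular estimate and then prove a pointwise bound on $(\eM f)^{p(x)}$ whose ``local'' part is controlled by the $\epsilon$-Diening condition \eqref{EpsLH0} and whose ``global'' part is controlled by $LH_\infty(\R^n)$, following the proof that $M^d$ is bounded on $\Lpp(\R^n)$ in \cite{VLS} but with \eqref{EpsLH0} replacing the ordinary Diening condition \eqref{DieningCondition}. By the homogeneity of the norm and the standard relationship between $\|\cdot\|_{\pp}$ and the modular $\rho(f)=\int_{\R^n}|f|^{\pp}$, it suffices to find a constant $C$ such that $\rho(f)\le 1$ implies $\int_{\R^n}(\eM f(x))^{p(x)}\,dx\le C$. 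Since $\eM f=\eM(|f|)$ and $\rho(f)=\rho(|f|)$, I may assume $f\ge 0$.

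The heart of the argument is a local estimate in which the $\epsilon$-Diening condition enters. Fix a dyadic cube $Q$, a point $x\in Q$, and assume $\rho(f)\le 1$. Because $p(x)\ge p_-(Q)$, I factor
\[
\left(\epsilon_Q\dashint_Q f\,dy\right)^{p(x)}=\left(\epsilon_Q\dashint_Q f\,dy\right)^{p_-(Q)}\left(\epsilon_Q\dashint_Q f\,dy\right)^{p(x)-p_-(Q)},
\]
and I claim the second factor is bounded by a constant depending only on $n$, $\pp$, and $\epsilon$. If $\epsilon_Q\dashint_Q f\le 1$ this is immediate, as the exponent is nonnegative. Otherwise I bound the exponent by $p_+(Q)-p_-(Q)$ and use that $\rho(f)\le 1$ forces $\int_Q f\le\int_Q f^{\pp}\,dy+|Q|\le 1+|Q|$, so that $\epsilon_Q\dashint_Q f\le \epsilon_Q/|Q|+\epsilon_Q$; raising to the power $p_+(Q)-p_-(Q)$ and expanding, the term $(\epsilon_Q/|Q|)^{p_+(Q)-p_-(Q)}$ is exactly what \eqref{EpsLH0} controls, while $\epsilon_Q^{p_+(Q)-p_-(Q)}\le\max(1,\|\epsilon\|_\infty)^{p_+-p_-}$ is bounded. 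Thus $\left(\epsilon_Q\dashint_Q f\right)^{p(x)}\le C\left(\epsilon_Q\dashint_Q f\right)^{p_-(Q)}$, and since $p_-(Q)\ge 1$, Jensen's inequality gives $\left(\epsilon_Q\dashint_Q f\right)^{p(x)}\le C\,\epsilon_Q^{p_-(Q)}\dashint_Q f(y)^{p_-(Q)}\,dy$. This is the precise point where $\epsilon LH_0(\R^n)$ replaces $LH_0(\R^n)$: the factor $\epsilon_Q$ is absorbed together with the oscillation $p_+(Q)-p_-(Q)$ of the exponent on $Q$.

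It remains to pass from the local exponent $p_-(Q)$ back to $p(\cdot)$ and to integrate. Writing $f=f\chi_{\{f\ge 1\}}+f\chi_{\{f<1\}}$, on $\{f\ge 1\}$ one has $f^{p_-(Q)}\le f^{\pp}$, while on $\{f<1\}$ one has $f^{p_-(Q)}\le 1$, and here I invoke the log-Holder decay estimate available under $LH_\infty(\R^n)$ (as in \cite{VLS}) to replace the resulting constant by the integrable, bounded function $(e+|x|)^{-np_-}$. For the large part one also uses $\eM g\le\|\epsilon\|_\infty M^d g$ together with boundedness of $M^d$ on the constant-exponent space $L^{p_-}(\R^n)$ and the $LH_\infty$ comparison of $p(x)$ with $p_\infty$ at spatial infinity, to control the contribution where $\eM f$ is large. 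Collecting the two parts yields $\int_{\R^n}(\eM f)^{p(x)}\,dx\le C$, completing the reduction.

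I expect the main obstacle to be the local estimate of the second paragraph: making the interaction between $\epsilon_Q$ and the oscillation $p_+(Q)-p_-(Q)$ precise is exactly what forces the $\epsilon$-Diening condition \eqref{EpsLH0}, and obtaining a clean, $Q$-uniform bound there is the crux. A secondary technical difficulty is that neither $M^d$ nor $\eM$ is bounded on $L^1(\R^n)$, so one cannot simply dominate $(\eM f)^{p(x)}$ by $M^d(f^{\pp})(x)$ and integrate; the large/small decomposition of $f$, combined with constant-exponent boundedness on $L^{p_-}(\R^n)$ and the $LH_\infty$ decay function, is what circumvents this.
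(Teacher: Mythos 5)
Your reduction to a modular estimate and your per-cube estimate in the second paragraph are both correct: the bound $\left(\epsilon_Q\dashint_Q f\,dy\right)^{p(x)}\le C\left(\epsilon_Q\dashint_Q f\,dy\right)^{p_-(Q)}$ when $\rho(f)\le 1$ is a sound consequence of \eqref{EpsLH0}, and it is essentially the content of Proposition \ref{EpsLH0Penrod} in the paper; moreover, taking the supremum over dyadic $Q\ni x$ would let you bypass the Calder\'on--Zygmund decomposition (Lemma \ref{CZ}) that the paper uses to organize this step. The genuine gap is in your third paragraph, where the estimate must be integrated. Because your Jensen step moves the \emph{entire} power $p_-(Q)$ inside the average, and you then bound $f^{p_-(Q)}\le f^{\pp}$ on $\{f\ge 1\}$, your chain of inequalities terminates in
\[
\left(\eM f(x)\right)^{p(x)}\le C\left(M^d\bigl(f_1^{\pp}\bigr)(x)+1\right),\qquad f_1=f\chi_{\{f\ge 1\}},
\]
and neither term on the right is integrable over $\R^n$: the function $f_1^{\pp}$ lies only in $L^1(\R^n)$ (its integral is $\rho(f_1)\le 1$), and $M^d$ does not map $L^1$ to $L^1$ --- precisely the trap you flag in your final paragraph. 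Invoking ``boundedness of $M^d$ on $L^{p_-}$'' cannot repair this, because after these inequalities there is no $L^{p_-}$ function left for $M^d$ to act on. The missing idea, which is the crux of the paper's estimate for the large part, is to keep the exponent $p_-$ \emph{outside} the average: since $f_1\ge 1$ or $f_1=0$ and $p_-\le p_-(Q)\le p(y)$ on $Q$, Jensen with exponent $p_-(Q)/p_-$ (not $p_-(Q)$) gives
\[
\left(\dashint_Q f_1\,dy\right)^{p_-(Q)}\le\left(\dashint_Q f_1^{p(y)/p_-}\,dy\right)^{p_-}\le M^d\bigl[f_1^{\pp/p_-}\bigr](x)^{p_-},\qquad x\in Q,
\]
and now $f_1^{\pp/p_-}$ belongs to $L^{p_-}(\R^n)$ with norm at most $1$, so the constant-exponent bound $\|M^d g\|_{L^{p_-}}\le (p_-)'\|g\|_{L^{p_-}}$ (this is where $p_->1$ enters) finishes the large part.

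Your treatment of the small part is also misdescribed, though fixable by quoting the paper. The $LH_\infty$ hypothesis is needed only for $f_2=f\chi_{\{f<1\}}$ (and \eqref{EpsLH0} is not needed there at all), and it does not act by ``replacing the resulting constant by $(e+|x|)^{-np_-}$'': Lemma \ref{LHinftyLemma} compares $\int_E F^{p(x)}dx$ with $\int_E F^{p_\infty}dx$ for $0\le F\le 1$, up to additive errors $\int_E R(x)^{p_-}dx$. The correct route is: from $0\le\|\epsilon\|_\infty^{-1}\eM f_2\le 1$, apply \eqref{LHinftyLemma:ineq1} with $F=\|\epsilon\|_\infty^{-1}\eM f_2$, use $\eM f_2\le\|\epsilon\|_\infty M^d f_2$ and the constant-exponent $L^{p_\infty}$ boundedness of $M^d$ (where $p_\infty>1$), then return from $\int f_2^{p_\infty}$ to $\int f_2^{p(x)}\le 1$ via \eqref{LHinftyLemma:ineq2}; all errors are finite because $R^{p_-}\in L^1(\R^n)$. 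Note also that your sentence assigning the ``$LH_\infty$ comparison of $p(x)$ with $p_\infty$'' to the large part is backwards: Lemma \ref{LHinftyLemma} requires $0\le F\le 1$, so it cannot be applied to $\eM f_1$, and $LH_\infty$ plays no role in that part. With these two repairs --- splitting $f=f_1+f_2$ \emph{before} estimating, keeping $p_-$ outside the average for $f_1$, and running the $p_\infty$ comparison for $f_2$ --- your outline becomes a complete proof, structurally the same as the paper's except that your pointwise supremum over cubes replaces the Calder\'on--Zygmund decomposition.
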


Theorem \ref{EpsMaxOpBdd} can then be used as a tool to prove the Haar multiplier is bounded on $\Lpp$.
\begin{theorem}\label{HaarBound}
Given $\epsilon = \{\epsilon_Q\}_{Q\in \D}$, let $\sqrt{\epsilon} = \{\sqrt{\epsilon_Q}\}_{Q\in \D}$. If $\pp \in \sqrt{\epsilon}LH_0(\R^n)\cap LH_\infty(\R^n)$ with $1< p_- \leq p_+ < \infty$, then the Haar multiplier $T_\epsilon$ is bounded on $\Lpp(\R^n)$. 
\end{theorem}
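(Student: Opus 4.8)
The plan is to pass to a bilinear (dual) formulation, dominate the resulting form by a sparse bilinear expression, and then split the multiplier weights as $\epsilon_Q=\sqrt{\epsilon_Q}\cdot\sqrt{\epsilon_Q}$ so that each factor is absorbed by a $\sqrt{\epsilon}$-maximal operator to which Theorem \ref{EpsMaxOpBdd} applies. First I would reduce to $f,g$ bounded with compact support and use the norm conjugate (associate space) formula for variable Lebesgue spaces, $\|T_\epsilon f\|_{\pp}\approx\sup|\langle T_\epsilon f,g\rangle|$ over $g$ with $\|g\|_{\cpp}\le 1$, which is available since $1<p_-\le p_+<\infty$. Expanding the pairing gives $\langle T_\epsilon f,g\rangle=\sum_{Q\in\D}\epsilon_Q\langle f,h_Q\rangle\langle g,h_Q\rangle$, so the goal becomes an estimate of the form $|\langle T_\epsilon f,g\rangle|\lesssim\int_{\R^n}M_{\sqrt\epsilon}f\cdot M_{\sqrt\epsilon}g\,dx$.

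The key step, and the step I expect to be the main obstacle, is a sparse domination of this bilinear form: I would construct a sparse family $\calS=\calS(f,g)\subseteq\D$ with
\[|\langle T_\epsilon f,g\rangle|\le C\sum_{Q\in\calS}\epsilon_Q\left(\dashint_Q|f|\,dy\right)\left(\dashint_Q|g|\,dy\right)|Q|.\]
This would follow from a Calder\'on--Zygmund stopping-time argument: beginning from a top cube $Q_0$, one selects the maximal descendants where either the average of $|f|$ or of $|g|$ roughly doubles, and these stopping cubes form a sparse family. For the cubes $Q$ lying between $Q_0$ and its stopping children, I would first invoke the domination property to pull out $\epsilon_Q\le\epsilon_{Q_0}$, then apply Cauchy--Schwarz and Bessel's inequality to the Haar coefficients. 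Since the stopped martingale stays comparable to $\dashint_{Q_0}|f|\,dy$ on $Q_0$, orthogonality gives $\sum_{Q}|\langle f,h_Q\rangle|^2\lesssim\big(\dashint_{Q_0}|f|\,dy\big)^2|Q_0|$ on the stopping region (and likewise for $g$), and the local contribution is bounded by $\epsilon_{Q_0}\big(\dashint_{Q_0}|f|\,dy\big)\big(\dashint_{Q_0}|g|\,dy\big)|Q_0|$. Here the monotonicity of $\epsilon$ along chains of dyadic cubes is exactly what lets the local multiplier sum be controlled as though $\epsilon$ were constant; this is the feature that replaces the $A_p$ hypothesis of the classical argument.

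Granting the sparse bound, I would split $\epsilon_Q=\sqrt{\epsilon_Q}\cdot\sqrt{\epsilon_Q}$ and use the disjoint major subsets $E_Q\subseteq Q$ of the sparse family, $|E_Q|\ge\tfrac12|Q|$, together with the pointwise estimates $\sqrt{\epsilon_Q}\dashint_Q|f|\,dy\le M_{\sqrt\epsilon}f(x)$ and $\sqrt{\epsilon_Q}\dashint_Q|g|\,dy\le M_{\sqrt\epsilon}g(x)$, valid for $x\in E_Q$. Disjointness of the $E_Q$ then yields
\[\sum_{Q\in\calS}\epsilon_Q\left(\dashint_Q|f|\,dy\right)\left(\dashint_Q|g|\,dy\right)|Q|\le 2\int_{\R^n}M_{\sqrt\epsilon}f\cdot M_{\sqrt\epsilon}g\,dx.\]

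Finally I would apply H\"older's inequality on $\Lpp$ to obtain $\int_{\R^n}M_{\sqrt\epsilon}f\cdot M_{\sqrt\epsilon}g\,dx\le C\|M_{\sqrt\epsilon}f\|_{\pp}\|M_{\sqrt\epsilon}g\|_{\cpp}$, and invoke Theorem \ref{EpsMaxOpBdd} for the collection $\sqrt{\epsilon}$, which is bounded and inherits the domination property. Boundedness on $\Lpp$ uses the hypothesis $\pp\in LH_\infty(\R^n)\cap\sqrt{\epsilon}LH_0(\R^n)$ directly; boundedness on $\Lcpp$ follows once I check that both conditions pass to the dual exponent. This holds because $LH_\infty(\R^n)$ is preserved under $\pp\mapsto\cpp$, and, since $p_->1$, the mean value theorem gives that $p'_-(Q)-p'_+(Q)$ is comparable to $p_-(Q)-p_+(Q)$ with a constant depending only on $p_-$, so the $\sqrt{\epsilon}$-Diening condition \eqref{EpsLH0} transfers to $\cpp$. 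Combining these bounds with $\|g\|_{\cpp}\le 1$ gives $\|T_\epsilon f\|_{\pp}\le C\|f\|_{\pp}$, and a standard density argument removes the temporary restriction on $f$.
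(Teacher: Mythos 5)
Your proposal is correct, and its final stage coincides with the paper's proof: dualize via the associate norm (Lemma \ref{AssocNormEquiv}), split $\epsilon_Q=\sqrt{\epsilon_Q}\cdot\sqrt{\epsilon_Q}$ over a sparse family, use the disjoint major subsets $E_Q$ (Remark \ref{SparsenessEquiv}) to dominate the sparse form by $\int_{\R^n} M_{\sqrt{\epsilon}}f\, M_{\sqrt{\epsilon}}g\,dx$, apply H\"older's inequality (Lemma \ref{HolderIneq}), invoke Theorem \ref{EpsMaxOpBdd} for the collection $\sqrt{\epsilon}$ applied to both $\pp$ and $\cpp$ after transferring the $LH_\infty$ and Diening-type hypotheses to the conjugate exponent, and finish by density. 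Where you genuinely diverge is the source of the sparse domination: the paper cites the pointwise bound $|T_\epsilon f|\lesssim S_\epsilon|f|$ of \cite{SVW2021} as a black box (Theorem \ref{PointwiseSparseBound}), whereas you re-derive a bilinear sparse bound by a stopping-time argument, correctly identifying that the domination property of $\epsilon$ is what lets you freeze $\epsilon_Q\leq\epsilon_{Q_0}$ on each stopping block. Your sketch is essentially a reconstruction of the argument behind the cited theorem; it buys self-containedness, and the bilinear formulation sidesteps the caveat that the pointwise statement holds only a.e.\ on $\supp(f)$. The cost is length and two technicalities you gloss over: (i) the $h_Q$ of this paper are supported in $\widehat{Q}$ and are \emph{not} mutually orthogonal, so the ``Bessel'' step must instead use $\langle f,h_Q\rangle=|Q|^{1/2}\bigl(\dashint_Q f\,dy-\dashint_{\widehat{Q}}f\,dy\bigr)$ and orthogonality of dyadic martingale differences, which still yields $\sum_Q|\langle f,h_Q\rangle|^2\leq\|f\|_{L^2}^2$; (ii) no single dyadic cube need contain both supports, so the argument must be run in each dyadic quadrant, with the (automatically sparse) tower of cubes above the supports handled by geometric decay. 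Both are routine. Finally, for the conjugate-exponent transfer you use the mean value theorem, getting $(p')_+(Q)-(p')_-(Q)\leq(p_--1)^{-2}\bigl(p_+(Q)-p_-(Q)\bigr)$; this \emph{upper} bound on the oscillation of $\cpp$, with constant depending only on $p_-$, is exactly the inequality the transfer requires, and is a clean alternative to the algebraic identity underlying the paper's Lemma \ref{EpsLH0ConjExpEquiv} (which, as written, reduces matters to a lower bound on that oscillation, even though the identity in its proof also yields the needed upper bound).
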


If we restrict ourselves to $\Lpp(Q_0)$ for some dyadic cube $Q_0$, we can improve the properties of the Haar multiplier to get compactness. Let $\D(Q_0)$ be the collection of dyadic cubes contained in $Q_0$, and $\epsilon LH_0(Q_0)$ be the same as \eqref{EpsLH0}, but for $Q\in \D(Q_0)$. 
\begin{theorem}\label{HaarCompact}
Given $Q_0 \in \D$, $\epsilon = \{\epsilon_Q\}_{Q\in \D(Q_0)}$ and $0 < \alpha < 1/2$, let $\epsilon^{\alpha} = \{\epsilon_Q^\alpha\}_{Q \in \D(Q_0)}$. Suppose
\begin{align}\label{EpsDecay}
\lim_{N\to \infty} \sup\{\epsilon_Q :  \ell(Q) < 2^{-N}\} = 0
\end{align}
and $\pp \in \epsilon^{\alpha}LH_0(Q_0)$ with $1 < p_- \leq p_+ < \infty$. Then the Haar multiplier is compact on $\Lpp(Q_0)$. 
\end{theorem}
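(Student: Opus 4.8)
The plan is to exhibit $T_\epsilon$ as the operator-norm limit on $\Lpp(Q_0)$ of finite-rank operators, so that compactness follows because the compact operators form a closed subspace of the bounded operators on the Banach space $\Lpp(Q_0)$. For $N\in\N$ define the truncation
\[
T_\epsilon^N f = \sum_{\substack{Q\in\D(Q_0)\\ \ell(Q)\ge 2^{-N}}}\epsilon_Q\langle f,h_Q\rangle h_Q .
\]
Since $Q_0$ has only finitely many dyadic descendants with $\ell(Q)\ge 2^{-N}$, each $T_\epsilon^N$ has finite-dimensional range and is compact. It therefore suffices to prove that the tail operator $R_N:=T_\epsilon-T_\epsilon^N=\sum_{\ell(Q)<2^{-N}}\epsilon_Q\langle f,h_Q\rangle h_Q$ satisfies $\|R_N\|_{\Lpp(Q_0)\to\Lpp(Q_0)}\to 0$; this simultaneously shows the sequence $T_\epsilon^N$ is Cauchy and identifies its limit as $T_\epsilon$. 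Writing $\eta_N:=\sup\{\epsilon_Q:\ell(Q)<2^{-N}\}$, hypothesis \eqref{EpsDecay} gives $\eta_N\to 0$, and every cube in $R_N$ has $\epsilon_Q\le\eta_N$; the whole game is to convert this coefficient smallness into operator-norm smallness, quantitatively.

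Two preliminary reductions set up the estimate. First, $LH_\infty(\R^n)$ is automatic: functions in $\Lpp(Q_0)$ are supported in the bounded set $Q_0$, so after extending $\pp$ to be constant off $Q_0$, condition \eqref{LHinfty} holds trivially and Theorems \ref{EpsMaxOpBdd} and \ref{HaarBound} apply. Second, since $\sqrt{\epsilon^{2\alpha}}=\epsilon^\alpha$ and $\pp\in\epsilon^\alpha LH_0(Q_0)$ by hypothesis, the sequence $\epsilon^{2\alpha}=\{\epsilon_Q^{2\alpha}\}$ inherits the domination property and boundedness from $\epsilon$ (as $t\mapsto t^{2\alpha}$ is increasing) and satisfies the hypotheses of Theorem \ref{HaarBound}. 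The $\epsilon^\alpha$-maximal operator $M_{\epsilon^\alpha}$ is then bounded on $\Lpp(Q_0)$ by Theorem \ref{EpsMaxOpBdd}; and because the oscillation $p'_-(Q)-p'_+(Q)=\tfrac{p_-(Q)-p_+(Q)}{(p_+(Q)-1)(p_-(Q)-1)}$ is comparable to $p_-(Q)-p_+(Q)$ when $1<p_-\le p_+<\infty$, the conjugate exponent also lies in $\epsilon^\alpha LH_0(Q_0)$, so $M_{\epsilon^\alpha}$ is bounded on $\Lcpp(Q_0)$ as well.

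To bound $\|R_N\|$, I would re-run the sparse-domination argument underlying Theorem \ref{HaarBound} on the tail, keeping the decay factor explicit. That argument dominates the bilinear form by an $\epsilon$-sparse form: for $f\in\Lpp(Q_0)$, $g\in\Lcpp(Q_0)$ there is a sparse family $\calS_N$ of tail cubes with
\[
|\langle R_N f,g\rangle|\lesssim\sum_{Q\in\calS_N}\epsilon_Q\,\Big(\dashint_Q|f|\Big)\Big(\dashint_Q|g|\Big)|Q| .
\]
On each tail cube $\epsilon_Q=\epsilon_Q^{1-2\alpha}\epsilon_Q^{2\alpha}\le\eta_N^{1-2\alpha}\epsilon_Q^{2\alpha}$, since $\epsilon_Q\le\eta_N$ and $1-2\alpha>0$. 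Pulling out $\eta_N^{1-2\alpha}$, writing $\epsilon_Q^{2\alpha}=\epsilon_Q^\alpha\cdot\epsilon_Q^\alpha$, and using sparseness together with the generalized Hölder inequality on $\Lpp(Q_0)$ gives
\[
|\langle R_N f,g\rangle|\lesssim\eta_N^{1-2\alpha}\,\|M_{\epsilon^\alpha}f\|_{\pp}\,\|M_{\epsilon^\alpha}g\|_{\cpp}\lesssim\eta_N^{1-2\alpha}\,\|f\|_{\pp}\,\|g\|_{\cpp},
\]
with constants depending only on $n$, $\pp$, $\alpha$, $\|\epsilon\|_\infty$, hence independent of $N$. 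Taking the supremum over $\|g\|_{\cpp}\le 1$ and invoking the norm-duality of $\Lpp(Q_0)$ yields $\|R_N\|\lesssim\eta_N^{1-2\alpha}\to 0$, completing the proof.

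The main obstacle is securing this estimate with a constant uniform in $N$. The naive route—truncating the coefficient sequence to the tail and applying Theorem \ref{HaarBound} directly—fails, because zeroing out the large-cube coefficients destroys the domination property at the interface between tail and non-tail cubes; and the alternative factorization $R_N=\eta_N^{1-2\alpha}\,T_{\epsilon^{2\alpha}}(I-\mathbb{E}_N)$ would require the dyadic conditional expectations $\mathbb{E}_N$ to be uniformly bounded on $\Lpp(Q_0)$, which is not available under the weak hypothesis $\pp\in\epsilon^\alpha LH_0(Q_0)$ (it is weaker than the Diening condition \eqref{DieningCondition} that makes $M^d$ bounded). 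Routing the estimate through the sparse form and the $\epsilon^\alpha$-maximal operator avoids both difficulties. The role of $\alpha<1/2$ then becomes transparent: the critical power for mere boundedness is $\sqrt{\epsilon}=\epsilon^{1/2}$ in Theorem \ref{HaarBound}, and working with the strictly smaller power $\epsilon^\alpha$ leaves the surplus exponent $1-2\alpha>0$, which we harvest as the decay rate $\eta_N^{1-2\alpha}$.
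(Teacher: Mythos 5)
Your finite-rank-plus-operator-norm-limit strategy, and all of the estimate machinery in your third paragraph (splitting $\epsilon_Q=\epsilon_Q^{1-2\alpha}\epsilon_Q^{\alpha}\epsilon_Q^{\alpha}$, controlling both factors by $M_{\epsilon^\alpha}$ on $\Lpp(Q_0)$ and $\Lcpp(Q_0)$, and harvesting the decay $\eta_N^{1-2\alpha}$), coincide with the paper's. But there is a genuine gap at the one step you cannot do without: the claimed sparse domination of the tail operator $R_N$ by a sparse form over \emph{tail cubes}. The only sparse domination available is Theorem \ref{PointwiseSparseBound}, imported from \cite{SVW2021} as a black box, and its standing hypothesis is precisely the domination property of the coefficient sequence. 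As you yourself note, the truncated sequence ($\epsilon_Q$ for $\ell(Q)<2^{-N}$, zero on larger cubes) violates this property: a small cube $P$ inside a large cube $Q$ would require $\epsilon_P\leq 0$. Your proposed repair --- ``re-run the sparse-domination argument underlying Theorem \ref{HaarBound} on the tail'' --- is not an argument. That proof is not reproduced in this paper; its stopping-time mechanism rests on the same Calder\'on--Zygmund structure as Lemma \ref{CZ}, whose key upper bound $\epsilon_{Q_j^\lambda}\textaver{\widehat{Q_j^\lambda}}|f|\,dy\leq\lambda$ uses the domination property explicitly; and you give no reason that, applied to a non-dominating sequence, it would produce a sparse family consisting only of cubes with $\ell(Q)<2^{-N}$, which is exactly what makes every coefficient at most $\eta_N$ and hence yields decay. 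The displayed bound for $|\langle R_Nf,g\rangle|$ is therefore an assumption, not a consequence of anything proved or cited.

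The paper evades this by reversing the order of truncation and domination: it applies Theorem \ref{PointwiseSparseBound} once to the full multiplier $T_\epsilon$, then truncates the resulting sparse operator $S_\epsilon$ rather than the Haar expansion. Because $S_\epsilon$ is a positive sum over a fixed sparse family $\calS\subset\D(Q_0)$, deleting the finitely many cubes with $\ell(Q)\geq 2^{-N}$ requires no structural hypothesis whatsoever, and on the remaining cubes one genuinely has $\epsilon_Q\leq\eta_N$; the tail estimate then runs exactly as you envisioned, using Remark \ref{SparsenessEquiv}, Lemma \ref{EpsLH0ConjExpEquiv}, Lemma \ref{AssocNormEquiv}, and the local boundedness Lemma \ref{LocalEpsMaxOpBdd} (which also dispenses with $LH_\infty$ cleanly, where you instead sketch an extension of $\pp$ off $Q_0$ but never extend $\epsilon$ to all of $\D$ while preserving domination and $\epsilon^\alpha LH_0$). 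The price of the paper's route is that the finite-rank operators approximate $S_\epsilon$, not $T_\epsilon$, so compactness must at the end be transferred to $T_\epsilon$ through the pointwise bound $|T_\epsilon f|\lesssim S_\epsilon|f|$. Your architecture, which approximates $T_\epsilon$ directly, would avoid that transfer and is in that sense cleaner --- but it stands or falls on tail sparse domination, which the resources of this paper do not provide; to complete your proof you would have to prove that domination yourself.
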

\begin{remark}
In \cite[Section 2.4]{SVW2021}, the authors give a compactness result for weighted spaces on all of $\R^n$. Their result requires a condition on the collection $\epsilon$ that implies that $\epsilon_Q \to 0$ as $\ell(Q) \to \infty$. However, this is impossible if $\epsilon$ has the domination property. Their proof, however, gives a local compactness property, and our proof is modeled on theirs. 

The compactness result for weighted spaces $L^p(w)$ on $\R^n$ was proved using extrapolation in \cite[Section 5]{ExtrapComp}, but our local compactness result on variable Legesgue spaces is new and cannot be proved using the extrapolation results in \cite{ExtrapComp}.
\end{remark}

The remainder of this paper is organized as follows. In Section \ref{sec:Preliminaries},
we state the necessary definitions and lemmas for variable Lebesgue spaces. We prove Theorem \ref{EpsMaxOpBdd} in Section \ref{sec:ProofEpsMaxOpsBdd}. In Section \ref{sec:HaarMult}, we prove Theorems \ref{HaarBound} and \ref{HaarCompact}. Lastly, in Section \ref{sec:Examples}, we show that the $\epsilon LH_0(\R^n)$ hypothesis of Theorem \ref{EpsMaxOpBdd} is weaker than the local log-Holder continuity condition defined in inequality \eqref{LH0}. We do this by showing there are exponent functions that are not in $LH_0(\R^n)$, but are in $\epsilon LH_0(\R^n)$ for some $\epsilon$. 

Throughout this paper, $C$ will denote a constant that may vary in value from line to line and which will depend on underlying parameters. If we want to specify the dependence, we will write, for instance, $C(n,\epsilon)$. If the value of the constant is not important, we will often write $A \lesssim B$ instead of $A \leq cB$ for some constant $c$. We will also use the convention that $1/\infty=0$.
%

%%%%%%%%%%%%%%%%%%%%%%%%
% Preliminaries
%%%%%%%%%%%%%%%%%%%

\section{Preliminaries}
\label{sec:Preliminaries}
We begin with the necessary definitions related to variable Lebesgue spaces. We refer the reader to \cite{VLS} for more information.
\begin{definition}
An exponent function on a set $\Omega$ is a Lebesgue measurable function $\pp :\Omega \to[1,\infty)$. Denote the collection of exponent functions on $\Omega$ by $\Pp(\Omega)$. Denote the essential infimum and essential supremum of $\pp$ on a set $E$ by $p_-(E)$ and $p_+(E)$, respectively. Denote $p_+(\Omega)$ by $p_+$ and $p_-(\Omega)$ by $p_-$.
\end{definition}

\begin{definition}
Given $\pp \in \Pp(\Omega)$ with $p_+<\infty$, and a Lebesgue measurable function $f$, define the modular associated with $\pp$ by
\[ \rho_{\pp}(f) = \int_{\Omega} |f(x)|^{p(x)} dx.\]
If $f(\cdot)^{\pp} \not\in L^1(\Omega)$, define $\rho_{\pp} = +\infty$. In situations where there is no ambiguity we will simply write $\rho(f)$.
\end{definition}

\begin{definition}
Given $\pp \in \Pp(\Omega)$, define the space $\Lpp(\Omega)$ as the set of Lebesgue measurable functions $f$ satifying $\|f\|_{\Lpp(\Omega)} < \infty$, where the norm $\|\cdot\|_{\Lpp(\Omega)}$ is defined as
\[\|f\|_{\Lpp(\Omega)} = \inf\{\lambda >0 : \rho_{\pp} (f/\lambda) \leq 1\}.\]
In situations where there is no ambiguity, we will write $\|f\|_{\pp}$ instead of $\|f\|_{\Lpp(\Omega)}$. 
\end{definition}

The following propositions relate the modular and the norm and will be used to prove Theorem \ref{EpsMaxOpBdd}. The first proposition allows us to conclude a norm is finite when the modular is finite.
\begin{prop}\cite[Proposition 2.12]{VLS}\label{ModNormEquiv}
Given $\pp \in \Pp(\Omega)$ with $p_+<\infty$, $f\in \Lpp(\Omega)$ if and only if $\rho(f)<\infty$.
\end{prop}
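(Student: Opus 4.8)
The plan is to prove both implications by exploiting the scaling behavior of the modular $\rho$ under dilation of $f$, with the hypothesis $p_+<\infty$ entering decisively in one direction.

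First I would record two elementary facts. Since $p(x)\geq 1>0$, the map $t\mapsto t^{p(x)}$ is nondecreasing on $[0,\infty)$, so $\lambda\mapsto\rho(f/\lambda)$ is nonincreasing on $(0,\infty)$. Next, for $\lambda\geq 1$ I would use the pointwise bounds coming from $p_-\leq p(x)\leq p_+$: because $\lambda^{-1}\leq 1$, one has $\lambda^{-p(x)}\leq\lambda^{-p_-}$, giving $\rho(f/\lambda)\leq\lambda^{-p_-}\rho(f)$; and because $\lambda\geq 1$, one has $\lambda^{p(x)}\leq\lambda^{p_+}$, giving $\rho(f)\leq\lambda^{p_+}\rho(f/\lambda)$. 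The second of these is exactly the point at which finiteness of $p_+$ is needed.

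For the forward direction, suppose $\rho(f)<\infty$. If $\rho(f)\leq 1$, then $\lambda=1$ is admissible in the infimum defining the norm, so $\|f\|_{\pp}\leq 1$. If $\rho(f)>1$, I would take $\lambda=\rho(f)^{1/p_-}\geq 1$, so that the first scaling inequality yields $\rho(f/\lambda)\leq\lambda^{-p_-}\rho(f)=1$, whence $\|f\|_{\pp}\leq\rho(f)^{1/p_-}<\infty$ and $f\in\Lpp(\Omega)$. Note that this direction does not require $p_+<\infty$. For the converse, suppose $f\in\Lpp(\Omega)$, so $\|f\|_{\pp}<\infty$ and hence the set $\{\lambda>0:\rho(f/\lambda)\leq 1\}$ is nonempty. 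Fixing any $\lambda_0\geq\max(1,\|f\|_{\pp}+1)$, the definition of the norm as an infimum furnishes an admissible $\mu\leq\lambda_0$ with $\rho(f/\mu)\leq 1$, and monotonicity of $\lambda\mapsto\rho(f/\lambda)$ then forces $\rho(f/\lambda_0)\leq 1$. Applying the second scaling inequality with $\lambda=\lambda_0$ gives $\rho(f)\leq\lambda_0^{p_+}\rho(f/\lambda_0)\leq\lambda_0^{p_+}<\infty$.

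The only real subtlety, and hence the main obstacle, is the converse: one must produce a finite $\lambda_0$ with $\rho(f/\lambda_0)\leq 1$, which is not immediate from the infimum alone but follows once $\lambda\mapsto\rho(f/\lambda)$ is known to be nonincreasing, since then every $\lambda$ exceeding the norm is itself admissible. With $p_+<\infty$ in hand the factor $\lambda_0^{p_+}$ is finite and the argument closes; were $p_+$ infinite this step would break down, which is precisely why the hypothesis appears in the statement.
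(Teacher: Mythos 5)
Your proof is correct. Note that the paper itself does not prove this proposition but cites it from \cite[Proposition 2.12]{VLS}; your argument --- monotonicity of $\lambda \mapsto \rho(f/\lambda)$ together with the scaling bounds $\rho(f/\lambda) \leq \lambda^{-p_-}\rho(f)$ and $\rho(f) \leq \lambda^{p_+}\rho(f/\lambda)$ for $\lambda \geq 1$ --- is exactly the standard proof of that cited result, and you correctly isolate the converse direction as the only place where $p_+ < \infty$ is needed.
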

\begin{prop}\cite[Corollary 2.22]{VLS}\label{ModNorm:ineq1}
Let $\pp \in \Pp(\Omega)$. If $\|f\|_{\pp} \leq 1$, then $\rho(f) \leq \|f\|_{\pp}$. 
\end{prop}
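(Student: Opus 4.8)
The plan is to deduce the inequality from two ingredients: a scaling estimate for the modular under multiplication by scalars in $(0,1]$, and the fact that the infimum defining $\|f\|_{\pp}$ is essentially attained by the modular. Write $\lambda_0 = \|f\|_{\pp}$, so by hypothesis $0 \le \lambda_0 \le 1$, and we must show $\rho(f) \le \lambda_0$. If $\lambda_0 = 0$, then $\rho(f/\lambda) \le 1$ for every $\lambda > 0$, and letting $\lambda \to 0^+$ forces $f = 0$ almost everywhere, so both sides vanish; hence we may assume $0 < \lambda_0 \le 1$.

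The key elementary observation is the scaling inequality
\[
\rho(\lambda g) \le \lambda\, \rho(g), \qquad 0 < \lambda \le 1,
\]
valid for any measurable $g$. This follows by writing $\rho(\lambda g) = \int_{\Omega} \lambda^{p(x)} |g(x)|^{p(x)}\,dx$ and noting that $p(x) \ge 1$ together with $0 < \lambda \le 1$ gives $\lambda^{p(x)} \le \lambda$. This is the only place where the standing assumption $\pp : \Omega \to [1,\infty)$ is used, and it is precisely what makes the modular subhomogeneous on the unit interval.

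Next I would show $\rho(f/\lambda_0) \le 1$. For any sequence $\lambda_k \downarrow \lambda_0$ with $\lambda_k > \lambda_0$, the definition of the norm as an infimum gives $\rho(f/\lambda_k) \le 1$. Since $1/\lambda_k \uparrow 1/\lambda_0$ and $t \mapsto t^{p(x)}$ is nondecreasing, the integrands $|f/\lambda_k|^{p(x)}$ increase pointwise to $|f/\lambda_0|^{p(x)}$, so the monotone convergence theorem yields $\rho(f/\lambda_0) = \lim_k \rho(f/\lambda_k) \le 1$. Combining this with the scaling inequality applied to $g = f/\lambda_0$ and $\lambda = \lambda_0 \le 1$,
\[
\rho(f) = \rho\!\left(\lambda_0 \cdot \frac{f}{\lambda_0}\right) \le \lambda_0\, \rho\!\left(\frac{f}{\lambda_0}\right) \le \lambda_0 = \|f\|_{\pp},
\]
which is the desired bound.

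The one genuinely technical point, and the main obstacle, is the middle step: the left-continuity of $\lambda \mapsto \rho(f/\lambda)$ that upgrades $\rho(f/\lambda) \le 1$ for $\lambda > \lambda_0$ into $\rho(f/\lambda_0) \le 1$. This is exactly where the infimum in the definition of the norm must be controlled, and it is handled by the monotone convergence argument above; everything else is elementary once the scaling inequality is in hand. If one prefers to avoid the limiting argument, one can instead treat $\lambda_0 < 1$ directly: for each $\lambda$ with $\lambda_0 < \lambda \le 1$ the scaling inequality gives $\rho(f) \le \lambda\,\rho(f/\lambda) \le \lambda$, and letting $\lambda \to \lambda_0^+$ yields $\rho(f) \le \lambda_0$; only the boundary case $\lambda_0 = 1$ would then still require the monotone convergence step.
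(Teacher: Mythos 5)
Your proof is correct. The paper does not prove this proposition itself but cites it from \cite[Corollary 2.22]{VLS}, and your argument --- the subhomogeneity $\rho(\lambda g) \leq \lambda \rho(g)$ for $0 < \lambda \leq 1$ (using $p(x) \geq 1$), combined with the monotone convergence step showing $\rho(f/\|f\|_{\pp}) \leq 1$ --- is essentially the standard proof given in that reference, so there is nothing to correct or compare beyond noting that your handling of the degenerate case $\|f\|_{\pp}=0$ and of the boundary case $\|f\|_{\pp}=1$ is careful and complete.
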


In \cite[Theorem 3.16]{VLS}, it is proven that the Hardy-Littlewood maximal operator $M$ is bounded on $\Lpp(\R^n)$ when $\pp \in LH_0(\R^n)\cap LH_\infty(\R^n)$ with $p_+<\infty$. Since the dyadic maximal operator $M^d$ is bounded pointwise by the Hardy-Littlewood maximal operator, i.e. $M^d f(x) \leq Mf(x)$, the same assumptions on $\pp$ ensure that $M^d$ is bounded on $\Lpp(\R^n)$. We can weaken the local assumption $LH_0(\R^n)$ by incorporating the collection $\epsilon$ into the definition of $\epsilon LH_0(\R^n)$ stated in inequality \eqref{EpsLH0} from Section \ref{sec:Introduction}.

Unfortunately, we cannot incorporate the collection $\epsilon$ into the $LH_\infty(\R^n)$ definition in any way to weaken it. This is due to the fact that the $\epsilon$-maximal operator is pointwise equivalent to the dyadic maximal operator near infinity. More precisely, given a function $f$ that is bounded and supported on a dyadic cube $Q_0$, we have that
\[\epsilon_{Q_0} M^df(x) \leq \eM f(x) \leq \|\epsilon\|_\infty M^d f(x),\]
for almost every $x  \not\in Q_0$. Since the constants $\epsilon_{Q_0}$ and $\|\epsilon\|_\infty$ do not depend on any information about $\epsilon_Q$ for $Q\neq Q_0$, any condition near infinity that we use to bound $\eM f$ outside of $Q_0$ will have to be the same condition we use to bound $M^d f$ outside of $Q_0$, and not a condition based on the properties of the collection $\epsilon$. 

Since we are assuming that $\pp \in LH_\infty(\R^n)$, we can use following lemma when proving Theorem \ref{EpsMaxOpBdd}.
\begin{lemma}\cite[Lemma 3.26]{VLS}\label{LHinftyLemma}
Let $\pp \in LH_\infty(\R^n)$ with $1<p_-\leq p_+ <\infty$. Let $R(x)=(e+|x|)^{-n}$. Then there exists a constant $C$, depending on $n$ and the $LH_\infty$ constants of $\pp$, such that given any set $E$ and any function $F$ with $0\leq F(x) \leq 1$, for $x\in E$, 
\begin{align}
\int_E F(x)^{p(x)} dx &\leq C \int_E F(x)^{p_\infty} dx + \int_E R(x)^{p_-} dx, \label{LHinftyLemma:ineq1}\\
\int_E F(x)^{p_\infty} dx & \leq C \int_E F(x)^{p(x)}dx + \int_E R(x)^{p_-} dx. \label{LHinftyLemma:ineq2}
\end{align}
\end{lemma}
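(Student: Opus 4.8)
The plan is to prove both inequalities at once by building a pointwise comparison between $F(x)^{p(x)}$ and $F(x)^{p_\infty}$ and then integrating, splitting the set $E$ according to whether $F(x)$ is small or large relative to the threshold $R(x) = (e+|x|)^{-n}$. The algebraic starting point is the identity $F(x)^{p(x)} = F(x)^{p_\infty}\,F(x)^{p(x)-p_\infty}$, so that the entire problem reduces to controlling the factor $F(x)^{p(x)-p_\infty}$ from above and below by an absolute constant. This control is available only where $F$ is not too small, which is exactly why the error term $\int_E R(x)^{p_-}\,dx$ must be isolated. Accordingly, I would partition $E$ into $E_1 = \{x\in E: F(x)\le R(x)\}$ and $E_2 = \{x\in E: F(x) > R(x)\}$.

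On $E_1$ the estimate is immediate. Since $e+|x|\ge e$ we have $0\le R(x)\le 1$, and because $F(x)\le R(x)\le 1$ with $p(x)\ge p_-$ and $p_\infty\ge p_-$, raising a number in $[0,1]$ to a larger exponent only decreases it, giving $F(x)^{p(x)}\le R(x)^{p(x)}\le R(x)^{p_-}$ and likewise $F(x)^{p_\infty}\le R(x)^{p_-}$. Integrating over $E_1$ therefore produces precisely the term $\int_E R(x)^{p_-}\,dx$ that appears on the right-hand side of both displayed inequalities.

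The substance of the argument is the estimate on $E_2$. Here $F(x) > R(x) = (e+|x|)^{-n}$, so $-\log F(x) < n\log(e+|x|)$, and combining this with the $LH_\infty$ bound $|p(x)-p_\infty|\le C_\infty/\log(e+|x|)$ yields
\[
\bigl| (p(x)-p_\infty)\log F(x) \bigr| = |p(x)-p_\infty|\,\bigl(-\log F(x)\bigr) < \frac{C_\infty}{\log(e+|x|)}\cdot n\log(e+|x|) = nC_\infty,
\]
and hence $e^{-nC_\infty}\le F(x)^{p(x)-p_\infty}\le e^{nC_\infty}$ for every $x\in E_2$. Thus $F(x)^{p(x)}$ and $F(x)^{p_\infty}$ are comparable on $E_2$ with constant $C = e^{nC_\infty}$, so $\int_{E_2}F^{p(x)}\le C\int_{E_2}F^{p_\infty}\le C\int_E F^{p_\infty}$ and, symmetrically, $\int_{E_2}F^{p_\infty}\le C\int_E F^{p(x)}$. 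Adding the $E_1$ and $E_2$ contributions gives both inequalities with $C=e^{nC_\infty}$, which depends only on $n$ and the $LH_\infty$ constant, as claimed.

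The one genuinely delicate point is the $E_2$ estimate, and in particular why the threshold must be exactly $R(x)=(e+|x|)^{-n}$. Without restricting to $F > R$, the product $(p(x)-p_\infty)\log F(x)$ can be arbitrarily large in absolute value, since $F$ may be extremely small even where $p(x)\ne p_\infty$, and then $F^{p(x)}$ and $F^{p_\infty}$ are no longer comparable. The choice of $R$ is calibrated so that $-\log R(x)=n\log(e+|x|)$ cancels exactly against the logarithmic decay $1/\log(e+|x|)$ furnished by the $LH_\infty$ condition, leaving the clean bound $nC_\infty$; this balancing is the whole point of the lemma. The remaining ingredients are routine, and the only side fact I would verify explicitly is $p_\infty\ge p_-$, which is immediate from $p(x)\ge p_-$ almost everywhere together with the definition of $p_\infty$ as the limiting value at infinity, and which underlies the $E_1$ estimate.
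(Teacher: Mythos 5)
Your proof is correct, and the constant you obtain, $C=e^{nC_\infty}$, has exactly the claimed dependence on $n$ and the $LH_\infty$ constants. The paper does not prove this lemma itself but quotes it from \cite[Lemma 3.26]{VLS}, and your argument is essentially the standard one given there: split $E$ at the threshold $F(x)\le R(x)$, absorb the small-$F$ part into $\int_E R(x)^{p_-}\,dx$ using monotonicity of $t\mapsto t^s$ on $[0,1]$ (together with $p_\infty\ge p_-$, which you rightly flag and justify), and on the set where $F>R$ use the $LH_\infty$ estimate to bound $|(p(x)-p_\infty)\log F(x)|\le nC_\infty$, making $F^{p(x)}$ and $F^{p_\infty}$ pointwise comparable.
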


We now provide the definition and properties of dyadic cubes. These properties are well-known and can be found in \cite[Section 3.2]{VLS}.
\begin{definition}
Let $Q_0=[0,1)^n$, and let $\D_0$ be the set of all translates of $Q_0$ whose vertices are on the lattice $\Z^n$. More generally, for each $k\in \Z$, let $Q_k = 2^{-k}Q_0=[0,2^{-k})^n$, and let $\D_k $ be the set of all translates of $Q_k$ whose vertices are on the lattice $2^{-k} \Z^n$. Define the set of dyadic cubes $\D$ by 
\[ \D = \bigcup_{k\in \Z} \D_k.\]
\end{definition}
\begin{prop}\label{DyadicCubeProperties}
Dyadic cubes have the following properties:
\begin{enumerate}
\item For each $k \in \Z$, if $Q \in \D_k$, then $\ell(Q)=2^{-k}$, where $\ell(Q)$ is the side length of $Q$.

\item For each $x \in \R^n$ and $k \in \Z$, there exists a unique cube $Q \in \D_k$ such that $x\in Q$.

\item Given any two cubes $Q_1, Q_2 \in \D$, either $Q_1\cap Q_2 = \emptyset$, $Q_1 \subset Q_2$, or $Q_2 \subset Q_1$.

\item For each $k \in \Z$, if $Q\in \D_k$, then there exists a unique cube $\widehat{Q} \in \D_{k-1}$ such that $Q\subset \widehat{Q}$. ($\widehat{Q}$ is referred to as the dyadic parent of $Q$.)

\item For each $k \in \Z$, if $Q \in \D_k$, then there exist $2^n$ cubes $P_i \in \D_{k+1}$ such that $P_i \subset Q$.
\end{enumerate}
\end{prop}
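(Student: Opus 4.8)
The plan is to establish all five properties directly from the explicit coordinate description of a dyadic cube. First I would record that $Q \in \D_k$ means exactly
\[
Q = 2^{-k}(m + Q_0) = \prod_{i=1}^n \bigl[2^{-k}m_i,\ 2^{-k}(m_i+1)\bigr)
\]
for a unique $m = (m_1,\dots,m_n) \in \Z^n$. Property (1) is then immediate, since each coordinate factor is a half-open interval of length $2^{-k}$. For property (2), I would fix $k$ and $x = (x_1,\dots,x_n)$ and observe that the containment $x \in Q$ forces $m_i \le 2^k x_i < m_i + 1$ in each coordinate, that is $m_i = \lfloor 2^k x_i\rfloor$; existence and uniqueness of the floor then give existence and uniqueness of $Q$. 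In particular the cubes of $\D_k$ partition $\R^n$, a fact I would use repeatedly.

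The heart of the proposition is property (3), and I would reduce it to one dimension using the product structure: two cubes intersect if and only if all their coordinate factors intersect, and one is contained in the other if and only if each factor is. So it suffices to prove the one-dimensional trichotomy for dyadic intervals. Given intervals $I$ of level $j$ and $J$ of level $k$ with, without loss of generality, $j \ge k$, so $|I| = 2^{-j} \le 2^{-k} = |J|$, I would locate the unique level-$k$ interval $J_0$ containing the left endpoint of $I$ (using the partition property) and then show $I \subseteq J_0$. This last step carries the real content: writing everything over the common denominator $2^{-j}$ and using that $2^{j-k}$ is a positive integer when $j \ge k$, the required inequality on the right endpoints reduces to the elementary fact that $a < 2^{j-k}(b+1)$ forces $a + 1 \le 2^{j-k}(b+1)$ for integers. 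With $I \subseteq J_0$ in hand, either $J = J_0$, so $I \subseteq J$, or $J \ne J_0$, in which case the two distinct level-$k$ intervals are disjoint and hence $I \cap J = \emptyset$. Lifting back to $\R^n$: if $Q_1 \cap Q_2 \ne \emptyset$, then every coordinate factor of the finer cube sits inside the corresponding factor of the coarser one, giving $Q_1 \subseteq Q_2$.

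Properties (4) and (5) I would then derive as corollaries of (2) and (3). For the parent, I would take $\widehat{Q}$ to be the unique cube in $\D_{k-1}$ containing a chosen corner of $Q$ (property (2)); since $k \ge k-1$ and $Q \cap \widehat{Q} \ne \emptyset$, property (3) gives $Q \subseteq \widehat{Q}$. Uniqueness is immediate: any two level-$(k-1)$ cubes both containing the nonempty set $Q$ must intersect, and by the partition property they coincide. For the children, I would split each coordinate factor $[2^{-k}m_i,\ 2^{-k}(m_i+1))$ into its two level-$(k+1)$ halves $[2^{-(k+1)}(2m_i + s_i),\ 2^{-(k+1)}(2m_i + s_i + 1))$ with $s_i \in \{0,1\}$; taking products over all sign vectors $s \in \{0,1\}^n$ produces exactly $2^n$ distinct cubes of $\D_{k+1}$, each contained in $Q$.

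I expect the only nonroutine step to be the one-dimensional nesting argument in property (3), specifically the integer inequality that prevents the finer interval from straddling the boundary between two coarser ones; once that is in place, everything else is bookkeeping with the floor function and the product structure.
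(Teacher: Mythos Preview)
Your argument is correct; the only point worth tightening is the lift of property~(3) from one dimension to $\R^n$, where you should make explicit that all coordinate factors of a single dyadic cube share the same level, so the ``finer/coarser'' comparison is consistent across coordinates and the factorwise inclusions all go the same way.

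As for comparison with the paper: there is nothing to compare. The paper does not prove this proposition at all; it is stated without proof as a well-known fact, with a pointer to \cite[Section~3.2]{VLS}. Your writeup supplies the details that the paper omits, and the approach you take---coordinate description via the floor function, reduction of the nesting trichotomy to a one-dimensional integer inequality, and derivation of parents and children from (2) and (3)---is the standard one.
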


The following proposition presents an equivalent characterization of $\epsilon LH_0(\R^n)$ which will be used in the proof of Theorem \ref{EpsMaxOpBdd}.
\begin{prop}\label{EpsLH0Penrod}
Given $\epsilon=\{\epsilon_Q\}_{Q\in\D}$, $\pp \in \epsilon LH_0(\R^n)$ if and only if there exists $C >0$ such that for all $Q\in \D$ and $x\in Q$,
\begin{align}
\left(\frac{|Q|}{\epsilon_Q}\right)^{p_-(Q)-p(x)} \leq C. \label{EpsLH0Penrod:ineq}
\end{align}
\end{prop}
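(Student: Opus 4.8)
The plan is to reduce both implications to the elementary monotonicity of the map $t \mapsto b^t$ for the base $b = |Q|/\epsilon_Q > 0$, exploiting that for a.e. $x \in Q$ one has $p_-(Q) \le p(x) \le p_+(Q)$, hence $p_-(Q)-p(x) \in [\,p_-(Q)-p_+(Q),\, 0\,]$. The exponent appearing in \eqref{EpsLH0Penrod:ineq} is thus squeezed between the two endpoint exponents $p_-(Q)-p_+(Q)$ and $0$, and I would compare the value $b^{p_-(Q)-p(x)}$ against the corresponding endpoint values $b^{p_-(Q)-p_+(Q)}$ and $b^0 = 1$. Throughout, the natural case split is whether $b \ge 1$ (so $t \mapsto b^t$ is nondecreasing) or $b < 1$ (so it is nonincreasing).

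For the forward implication I would assume $\pp \in \epsilon LH_0(\R^n)$, i.e. $b^{p_-(Q)-p_+(Q)} \le C$, and argue pointwise for a.e. $x \in Q$. When $b \ge 1$, nonpositivity of the exponent gives $b^{p_-(Q)-p(x)} \le b^0 = 1$; when $b < 1$, monotonicity together with $p_-(Q)-p(x) \ge p_-(Q)-p_+(Q)$ gives $b^{p_-(Q)-p(x)} \le b^{p_-(Q)-p_+(Q)} \le C$. Either way the pointwise quantity is bounded by $\max(1,C)$, which is exactly \eqref{EpsLH0Penrod:ineq}.

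For the converse I would assume \eqref{EpsLH0Penrod:ineq} with a constant $C \ge 1$ and seek to recover the supremum-type bound $b^{p_-(Q)-p_+(Q)} \le C$. The case $b \ge 1$ is immediate, since the exponent is nonpositive and so $b^{p_-(Q)-p_+(Q)} \le 1 \le C$. The case $b < 1$ is where the real work lies: I would take logarithms in \eqref{EpsLH0Penrod:ineq} to obtain $(p_-(Q)-p(x))\log b \le \log C$ for a.e. $x \in Q$, divide by $\log b < 0$ (reversing the inequality), and then pass to the essential infimum over $x \in Q$, using $\operatorname*{ess\,inf}_{x\in Q}\big(p_-(Q)-p(x)\big) = p_-(Q) - p_+(Q)$. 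Multiplying back by $\log b$ and exponentiating returns $b^{p_-(Q)-p_+(Q)} \le C$, as desired.

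I expect the only subtle step to be this last one: in the regime $b < 1$ the direction of the inequality flips when dividing by the negative quantity $\log b$, and one must correctly transfer the pointwise a.e. bound into a statement about $p_+(Q)$ through the identity $\operatorname*{ess\,inf}_{x\in Q}\big(p_-(Q)-p(x)\big) = p_-(Q) - p_+(Q)$. All remaining steps are routine consequences of the monotonicity of $t \mapsto b^t$ and the a.e. containment $p(x) \in [p_-(Q), p_+(Q)]$.
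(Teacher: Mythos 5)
Your proof is correct and takes essentially the same approach as the paper: both directions reduce to monotonicity of $t \mapsto (|Q|/\epsilon_Q)^t$ with a case split on whether the base $|Q|/\epsilon_Q$ exceeds $1$. The only cosmetic difference is in the converse, where the paper picks a near-extremal point $x_0 \in Q$ with $p(x_0)+\delta > p_+(Q)$ and lets $\delta \to 0$, while you linearize with logarithms and invoke the identity $\operatorname*{ess\,inf}_{x\in Q}\bigl(p_-(Q)-p(x)\bigr) = p_-(Q)-p_+(Q)$; these are interchangeable ways of recovering the essential supremum.
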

\begin{proof}
Assume $\pp \in \epsilon LH_0(\R^n)$. Fix $Q \in \D$. Observe that if $|Q|/\epsilon_Q >1$, then for any $x \in Q$, \eqref{EpsLH0Penrod:ineq} holds with $C=1$. Suppose $|Q|/\epsilon_Q\leq 1$. Then for any $x\in Q$, we have
\[\left( \frac{|Q|}{\epsilon_Q}\right)^{p_-(Q)-p(x)} \leq \left( \frac{|Q|}{\epsilon_Q}\right)^{p_-(Q)-p_+(Q)} \leq C.\]
To prove the reverse direction, observe that if $|Q|/\epsilon_Q >1$, then \eqref{EpsLH0Penrod:ineq} holds with $C=1$. Suppose $|Q|/\epsilon_Q \leq 1$. Let $\delta >0$ be arbitrarily small and choose $x_0 \in Q$ such that $p(x_0) + \delta > p_+(Q)$. Then by the definition of $\epsilon LH_0(\R^n)$, we have
\[\left( \frac{|Q|}{\epsilon_Q}\right)^{p_-(Q) -p_+(Q)} \leq \left( \frac{|Q|}{\epsilon_Q}\right)^{p_-(Q) - p(x_0)-\delta} \leq C \left( \frac{|Q|}{\epsilon_Q}\right)^{-\delta}.\]
Letting $\delta$ tend to $0$, we see that $\pp \in \epsilon LH_0(\R^n)$.
\end{proof}

In order to prove Theorem \ref{EpsMaxOpBdd}, we need the following Calderon-Zygmund decomposition for the $\epsilon$-maximal operator. This is very similar to the classical Calderon-Zygmund decomposition for the dyadic maximal operator \cite[Lemma 3.9]{VLS}. For the convenience of the reader we include the short proof.
\begin{lemma}\label{CZ}
Let $f\in L^1_{loc}(\R^n)$ be such that $\dashint_{Q} |f(y)|dy \to 0$ as $|Q|\to \infty$. Given $\lambda >0$, there exists a (possibly empty) collection of disjoint dyadic cubes $\{ Q_j^\lambda\}_j$ such that
\begin{align}\label{CZdecomp}
\Omega_\lambda = \{x \in \R^n : \eM f(x) > \lambda\} = \bigcup_{j} Q_j^\lambda,
\end{align}
and for each $Q_j^\lambda$, 
\begin{align}\label{CZbounds}
\lambda < \epsilon_{Q_j^\lambda} \dashint_{Q_j^\lambda} |f(y)| dy \leq 2^n \lambda.
\end{align}
\end{lemma}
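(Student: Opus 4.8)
The plan is to run the usual Calder\'on--Zygmund stopping-time argument for a dyadic maximal operator, but to keep careful track of the weights $\epsilon_Q$; the domination property is precisely what will be needed to obtain the upper bound in \eqref{CZbounds}.

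First I would use the decay hypothesis to guarantee that a maximal selected cube exists through each point of $\Omega_\lambda$. Since $\epsilon$ is bounded, for any $x$ and any dyadic cube $Q\ni x$ we have $\epsilon_Q\dashint_Q|f|\,dy\le\|\epsilon\|_\infty\dashint_Q|f|\,dy$, which tends to $0$ as $|Q|\to\infty$ by assumption. The dyadic cubes containing a fixed $x$ form an increasing chain (one at each scale), so only cubes up to some bounded size can satisfy $\epsilon_Q\dashint_Q|f|\,dy>\lambda$. If $x\in\Omega_\lambda$, then $\eM f(x)>\lambda$ forces at least one cube $Q\ni x$ with $\epsilon_Q\dashint_Q|f|\,dy>\lambda$, and among these there is a largest one, which I call $Q_x$. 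I would then let $\{Q_j^\lambda\}_j$ be the distinct cubes arising as $Q_x$ for some $x\in\Omega_\lambda$.

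Next I would verify the structural claims. Disjointness follows from maximality together with property (3) of Proposition \ref{DyadicCubeProperties}: two distinct selected cubes are either disjoint or nested, and nesting would contradict the maximality of the smaller one. The identity \eqref{CZdecomp} is then immediate: every $x\in\Omega_\lambda$ lies in $Q_x$, giving the inclusion $\Omega_\lambda\subseteq\bigcup_j Q_j^\lambda$, while if $x\in Q_j^\lambda$ then $\eM f(x)\ge\epsilon_{Q_j^\lambda}\dashint_{Q_j^\lambda}|f|\,dy>\lambda$, giving the reverse inclusion. The lower bound in \eqref{CZbounds} holds by the very selection of $Q_j^\lambda$.

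The main point is the upper bound, and this is where the domination property enters. For a selected cube $Q_j^\lambda$, maximality means its dyadic parent $\widehat{Q_j^\lambda}$ was not selected, so $\epsilon_{\widehat{Q_j^\lambda}}\dashint_{\widehat{Q_j^\lambda}}|f|\,dy\le\lambda$. Since $Q_j^\lambda\subseteq\widehat{Q_j^\lambda}$, the domination property gives $\epsilon_{Q_j^\lambda}\le\epsilon_{\widehat{Q_j^\lambda}}$, and using $|\widehat{Q_j^\lambda}|=2^n|Q_j^\lambda|$ I would estimate
\[
\epsilon_{Q_j^\lambda}\dashint_{Q_j^\lambda}|f|\,dy
\le \epsilon_{\widehat{Q_j^\lambda}}\frac{1}{|Q_j^\lambda|}\int_{Q_j^\lambda}|f|\,dy
\le 2^n\,\epsilon_{\widehat{Q_j^\lambda}}\dashint_{\widehat{Q_j^\lambda}}|f|\,dy
\le 2^n\lambda.
\]
The hard part is conceptual rather than computational: in the classical decomposition the factor $\epsilon_Q$ is absent and the upper bound is automatic, whereas here it would fail outright if $\epsilon_{Q_j^\lambda}$ could exceed $\epsilon_{\widehat{Q_j^\lambda}}$. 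The domination property is exactly what licenses replacing the weight on the child by the weight on the parent before passing to the parent's average, and the decay hypothesis is the only genuinely nontrivial input, needed solely to ensure that maximal cubes exist.
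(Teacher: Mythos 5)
Your proof is correct and follows essentially the same stopping-time argument as the paper: maximal cubes exist by the decay hypothesis together with the boundedness of $\epsilon$, the lower bound comes from the selection, and the upper bound uses the domination property $\epsilon_{Q_j^\lambda}\le\epsilon_{\widehat{Q_j^\lambda}}$ combined with the non-selection of the parent. The only cosmetic difference is the direction of the final chain of inequalities (you work up from the child, the paper works down from the parent), which is immaterial.
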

\begin{proof}
If $\Omega_\lambda$ is empty, then we choose an empty collection and the conclusions hold trivially. Suppose $\Omega_\lambda$ is nonempty and let $x \in \Omega_\lambda$. Then there exists $Q \in \D$ containing $x$ such that 
\[ \epsilon_{Q} \dashint_{Q} |f(y)| dy > \lambda.\]

Since $\{\epsilon_Q\}_{Q\in \D}$ is bounded and $\dashint_{Q}|f(y)| dy \to 0$ as $|Q|\to \infty$, there is a maximal dyadic cube with this property. Denote it by $Q_x$. Clearly, $\Omega_\lambda \subseteq \bigcup_{x\in \Omega_\lambda} Q_x$. The reverse inclusion holds as well. To see this, consider any $Q_x$ and let $z \in Q_x$. Then 
\[\eM f(z) \geq \epsilon_{Q_x} \dashint_{Q_x} |f(y)| dy \;\chi_{Q_x}(z) > \lambda,\]
and so $z \in \Omega_{\lambda}$. By the nature of dyadic cubes, the cubes $\{Q_x\}_{x\in \Omega_\lambda}$ are all equal or disjoint. Also, note that since each $\D_k$ is a countable collection, and $\D$ is the countable union of all $\D_k$, we have that $\D$ is countable. Consequently, there are at most countably many such cubes $Q_x$. Enumerate these cubes by $\{Q_j^\lambda\}_j$. Clearly these cubes satisfy \eqref{CZdecomp}. 

The lower bound in \eqref{CZbounds} is immediate by our choice of $\{Q_j^\lambda\}_j$. To show the upper bound, observe that by the domination property of the collection $\epsilon$, we have $\epsilon_{\widehat{Q_j^\lambda}} \geq \epsilon_{Q_j^\lambda}$. Combining this with the maximality of our choice of $Q_j^k$, 
\[ \lambda\geq \epsilon_{\widehat{Q_j^\lambda}} \dashint_{\widehat{Q_j^\lambda}} |f(y)| dy \geq \epsilon_{Q_j^\lambda} \dashint_{\widehat{Q_j^\lambda}} |f(y)| dy \geq 2^{-n} \epsilon_{Q_j^\lambda} \dashint_{Q_j^\lambda} |f(y)| dy ,\]
Multiplying by $2^n$ gives the desired upper bound.
\end{proof}
In order to prove Theorem \ref{HaarCompact}, we need a local version of Lemma \ref{CZ}. We state the local version and briefly outline the adaptations to the proof of Lemma \ref{CZ} needed to prove it. 
\begin{lemma}\label{LocalCZ}
Given $Q_0 \in \D$, $\epsilon = \{\epsilon_Q\}_{Q\in \D(Q_0)}$, and $f \in L^1_{loc}(Q_0)$, for any $\lambda > \epsilon_{Q_0} \dashint_Q |f(y)|dy$, there exists a (possibly empty) collection of disjoint cubes $\{Q_j^\lambda\}_j$ such that 
\begin{align}\label{LocalCZDecomp}
\Omega_\lambda = \{x \in Q_0 : M_\epsilon f(x) >\lambda\} = \bigcup_j Q_j^\lambda,
\end{align}
and for each $Q_j^\lambda$, 
\begin{align}\label{LocalCZbounds}
\lambda < \epsilon_{Q_j^\lambda} \dashint_{Q_j^\lambda}|f(y)| dy \leq 2^n \lambda.
\end{align}
\end{lemma}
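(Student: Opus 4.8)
The plan is to mirror the proof of Lemma \ref{CZ} almost verbatim, with two modifications forced by localizing to $Q_0$: the sup defining $\eM$ ranges only over $Q \in \D(Q_0)$, and in place of the hypothesis that averages vanish as $|Q|\to\infty$, we have only the constraint $\lambda > \epsilon_{Q_0}\dashint_{Q_0}|f|$. First I would dispose of the empty case as before. Assuming $\Omega_\lambda$ is nonempty, I would fix $x\in\Omega_\lambda$ and produce a dyadic cube $Q\in\D(Q_0)$ containing $x$ with $\epsilon_Q \dashint_Q |f| > \lambda$. The key point is that I may select a \emph{maximal} such cube: the cubes of $\D(Q_0)$ containing $x$ form an increasing chain terminating in $Q_0$ itself, and the threshold condition $\lambda > \epsilon_{Q_0}\dashint_{Q_0}|f|$ guarantees that $Q_0$ \emph{fails} the inequality $\epsilon_Q\dashint_Q|f| > \lambda$. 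Hence any cube satisfying the inequality is a proper subcube of $Q_0$, and since there are only finitely many dyadic cubes containing $x$ between $x$ and $Q_0$ at each scale above the selected one, a maximal one $Q_x$ exists.

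Next I would establish the set equality \eqref{LocalCZDecomp}. The inclusion $\Omega_\lambda \subseteq \bigcup_x Q_x$ is immediate from the selection. For the reverse inclusion, I take any $Q_x$ and any $z\in Q_x$, and observe
\[
M_\epsilon f(z) \geq \epsilon_{Q_x}\dashint_{Q_x}|f(y)|\,dy\;\chi_{Q_x}(z) > \lambda,
\]
so $z\in\Omega_\lambda$; here it matters that $Q_x\in\D(Q_0)$ is an admissible cube in the local supremum. As in Lemma \ref{CZ}, the cubes $\{Q_x\}$ are pairwise equal or disjoint by Proposition \ref{DyadicCubeProperties}(3), and $\D(Q_0)$ is countable, so I can enumerate the distinct cubes as $\{Q_j^\lambda\}_j$, yielding \eqref{LocalCZDecomp}.

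Finally I would prove the bounds \eqref{LocalCZbounds}. The lower bound is built into the selection of each $Q_j^\lambda$. For the upper bound, the crucial structural fact is that the dyadic parent $\widehat{Q_j^\lambda}$ still lies in $\D(Q_0)$ — this is exactly where the threshold hypothesis pays off, since $Q_j^\lambda$ is a \emph{proper} subcube of $Q_0$, so its parent is contained in $Q_0$ and is therefore an admissible cube. Then maximality of $Q_j^\lambda$ gives $\epsilon_{\widehat{Q_j^\lambda}}\dashint_{\widehat{Q_j^\lambda}}|f| \leq \lambda$, and combining with the domination property $\epsilon_{\widehat{Q_j^\lambda}} \geq \epsilon_{Q_j^\lambda}$ and the volume doubling $|\widehat{Q_j^\lambda}| = 2^n|Q_j^\lambda|$ yields
\[
\lambda \geq \epsilon_{\widehat{Q_j^\lambda}}\dashint_{\widehat{Q_j^\lambda}}|f(y)|\,dy \geq \epsilon_{Q_j^\lambda}\dashint_{\widehat{Q_j^\lambda}}|f(y)|\,dy \geq 2^{-n}\epsilon_{Q_j^\lambda}\dashint_{Q_j^\lambda}|f(y)|\,dy,
\]
exactly as before; multiplying by $2^n$ finishes the upper bound. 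I expect the only genuine subtlety — the single place the local argument differs from Lemma \ref{CZ} — to be verifying that the selected cubes are \emph{proper} subcubes of $Q_0$ so that both the existence of a maximal cube and the admissibility of the parent go through; this is precisely the role of the hypothesis $\lambda > \epsilon_{Q_0}\dashint_{Q_0}|f|$, which replaces the decay-at-infinity condition of the global lemma.
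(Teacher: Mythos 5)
Your proof is correct and follows essentially the same route as the paper's: the paper likewise selects maximal cubes exactly as in Lemma \ref{CZ} and uses the hypothesis $\lambda > \epsilon_{Q_0}\dashint_{Q_0}|f(y)|\,dy$ precisely to guarantee that $Q_0$ itself is never among the selected cubes, so every $Q_j^\lambda$ has a dyadic parent inside $Q_0$ and the upper-bound argument carries over verbatim. Your additional remark that maximality comes for free in the local setting (the chain of admissible cubes containing $x$ terminates at $Q_0$, replacing the decay-at-infinity hypothesis of the global lemma) is a correct and slightly more explicit account of a point the paper leaves implicit.
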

\begin{proof}
Choose the collection $\{Q_j^\lambda\}_j$ as in the proof of Lemma \ref{CZ}. The lower bound in inequality \eqref{LocalCZbounds} is immediate. The proof of the upper bound depends on every $Q_j^\lambda$ having a dyadic parent $\widehat{Q_j^\lambda}$ in $Q_0$, which will hold if and only if $Q_0$ is not in the collection $\{Q_j^\lambda\}_j$. Recall that we chose the cubes $Q_j^\lambda$ as the maximal cubes satisfying $\epsilon_{Q_j^\lambda} \dashint_{Q_j^\lambda} |f(y)|dy > \lambda$. Since we only consider $\lambda > \epsilon_{Q_0}\dashint_{Q_0} |f(y)| dy$, we have that $Q_0$ is not in $\{Q_j^\lambda\}_j$. Hence, every cube in $\{Q_j^\lambda\}_j$ has a dyadic parent in $Q_0$, and so the proof of the upper bound in inequality \eqref{LocalCZbounds} is the same as in the proof of Lemma \ref{CZ}.
\end{proof}

The following lemma allows us to apply the Calderon-Zygmund decomposition to any function in $\Lpp(\R^n)$ when $p_+<\infty$.
\begin{lemma}\cite[Lemma 3.29]{VLS}\label{AvgDecay}
Given $\pp \in \Pp(\R^n)$, suppose $p_+<\infty$. Then for all $f\in \Lpp(\R^n)$, $\dashint_Q |f(y)|dy \to 0$ as $|Q|\to \infty$. 
\end{lemma}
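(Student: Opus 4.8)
The plan is to control the average through the generalized Hölder inequality for variable Lebesgue spaces and then exploit the finiteness of $p_+$ to extract a negative power of $|Q|$. First I would write $\dashint_Q |f(y)|\,dy = |Q|^{-1}\int_{\R^n} |f(y)|\chi_Q(y)\,dy$ and apply the generalized Hölder inequality from \cite{VLS}, which produces a constant $C=C(\pp)$ with
\[
\int_{\R^n} |f(y)|\chi_Q(y)\,dy \leq C\,\|f\|_{\pp}\,\|\chi_Q\|_{\cpp},
\]
where $\cpp$ is the dual exponent. This reduces the problem to estimating $\|\chi_Q\|_{\cpp}$ and comparing its growth with $|Q|$.

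Second, I would estimate $\|\chi_Q\|_{\cpp}$ for large cubes directly from the definition of the norm as an infimum over the modular. Since only $|Q|\to\infty$ matters, I may assume $|Q|\geq 1$ and test with $\lambda = |Q|^{1/(p')_-(Q)}$. Because $\lambda \geq 1$ and $p'(y)\geq (p')_-(Q)$ on $Q$, we have $\lambda^{-p'(y)}\leq \lambda^{-(p')_-(Q)}$, so
\[
\rho_{\cpp}(\chi_Q/\lambda) = \int_Q \lambda^{-p'(y)}\,dy \leq |Q|\,\lambda^{-(p')_-(Q)} = 1,
\]
which yields $\|\chi_Q\|_{\cpp}\leq |Q|^{1/(p')_-(Q)}$. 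Since $p'(\cdot)$ is a decreasing function of $\pp$, the infimum of $p'$ over $Q$ is attained at the supremum of $\pp$, giving $(p')_-(Q) = (p_+(Q))' \geq (p_+)' = p_+/(p_+-1)$; combined with $|Q|\geq 1$ this produces the clean bound $\|\chi_Q\|_{\cpp}\leq |Q|^{\,1-1/p_+}$.

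Finally, I would combine the two estimates to get
\[
\dashint_Q |f(y)|\,dy \leq \frac{C\,\|f\|_{\pp}}{|Q|}\,|Q|^{\,1-1/p_+} = C\,\|f\|_{\pp}\,|Q|^{-1/p_+},
\]
and let $|Q|\to\infty$. The decisive point, and the only place the hypothesis enters, is that $p_+<\infty$ forces $1/p_+>0$, so $|Q|^{-1/p_+}\to 0$ and the average vanishes. I expect the main obstacle to be purely bookkeeping: correctly tracking the direction of the inequalities when passing between $\pp$ and $\cpp$, namely that the infimum of $p'(\cdot)$ over $Q$ is governed by the supremum of $\pp$ and hence by $p_+$. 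If $p_+$ were permitted to be infinite, this exponent would degenerate to zero and the argument would fail, which is consistent with the fact that the constant function $1\notin\Lpp(\R^n)$ has non-vanishing averages.
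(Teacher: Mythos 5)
The paper does not prove this lemma at all; it is quoted directly from \cite[Lemma 3.29]{VLS}, and your argument is essentially the proof given there: H\"older's inequality plus the estimate $\|\chi_Q\|_{\cpp}\leq |Q|^{1-1/p_+}$ for $|Q|\geq 1$, so your proposal is correct and takes the same route as the source. The only caveat is that the lemma as stated allows $p_-=1$, in which case $p'(\cdot)=\infty$ on a set of positive measure and your modular computation $\rho_{\cpp}(\chi_Q/\lambda)=\int_Q \lambda^{-p'(y)}\,dy$ must be amended to include the $L^\infty$ term (e.g.\ take $\lambda=2|Q|^{1/(p')_-(Q)}$ with the convention $1/\infty=0$); this is a trivial fix, and is moot under the paper's standing assumption $1<p_-$.
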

In order to prove Theorem \ref{HaarBound}, we will need some lemmas about the conjugate exponent function $\cpp$ defined pointwise by
\[\frac{1}{p'(x)} = 1-\frac{1}{p(x)}.\]
The first two lemmas will allow us to transfer properties of $\pp$ to $\cpp$. The first lemma is well-known and is an immediate consequence of the definition. See \cite{VLS}.
\begin{lemma}\label{ConjExpLHinfty}
Let $\pp \in \Pp(\R^n)$ with $1 <p_- \leq p_+<\infty$. Then $\pp \in LH_\infty(\R^n)$ if and only if $\cpp\in LH_\infty(\R^n)$.
\end{lemma}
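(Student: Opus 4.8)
The final statement to prove is Lemma~\ref{ConjExpLHinfty}: for $\pp\in\Pp(\R^n)$ with $1<p_-\leq p_+<\infty$, the exponent $\pp$ is in $LH_\infty(\R^n)$ if and only if its conjugate $\cpp$ is in $LH_\infty(\R^n)$. My plan is to prove this directly from the definitions by establishing the equivalence of the limiting values and then comparing the two log-H\"older decay estimates via the reciprocal relation $1/p(x)+1/p'(x)=1$. By symmetry (since $(p')'=p$), it suffices to prove only one direction.

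First I would record the pointwise algebraic identity that drives everything. Subtracting the defining relations at a point $x$ and at the limiting value, I would write
\[
\frac{1}{p'(x)}-\frac{1}{p'_\infty} = \Bigl(1-\frac{1}{p(x)}\Bigr)-\Bigl(1-\frac{1}{p_\infty}\Bigr) = \frac{1}{p_\infty}-\frac{1}{p(x)} = \frac{p(x)-p_\infty}{p(x)\,p_\infty},
\]
where I set $p'_\infty$ to be the conjugate of $p_\infty$, so that $1/p_\infty+1/p'_\infty=1$. This shows the difference of reciprocals of the conjugates equals the difference of reciprocals of the originals (up to sign), which is the clean quantity to control. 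I would then convert back to a bound on $|p'(x)-p'_\infty|$ by clearing denominators: since
\[
|p'(x)-p'_\infty| = p'(x)\,p'_\infty\Bigl|\frac{1}{p'_\infty}-\frac{1}{p'(x)}\Bigr| = p'(x)\,p'_\infty\cdot\frac{|p(x)-p_\infty|}{p(x)\,p_\infty},
\]
the hypotheses $1<p_-\leq p_+<\infty$ guarantee that $p(x)$, $p'(x)$, and the limiting constants all lie in fixed compact subintervals of $(1,\infty)$, so the prefactor $p'(x)\,p'_\infty/(p(x)\,p_\infty)$ is bounded above and below by constants depending only on $p_-$ and $p_+$. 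Plugging the assumed $LH_\infty$ bound \eqref{LHinfty} for $\pp$ into $|p(x)-p_\infty|$ then yields $|p'(x)-p'_\infty|\leq C'_\infty/\log(e+|x|)$, i.e.\ $\cpp\in LH_\infty(\R^n)$ with limiting value $p'_\infty$.

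The main subtlety is not the computation but justifying that the limiting value of $\cpp$ is exactly $p'_\infty$ (the conjugate of $p_\infty$), rather than some unrelated constant. I would handle this by noting that the $LH_\infty$ condition forces $p(x)\to p_\infty$ as $|x|\to\infty$ in an appropriate averaged sense, and since $t\mapsto t/(t-1)$ is continuous and strictly monotone on $(1,\infty)$, the conjugation map is a bijection of $(1,\infty)$ that is uniformly Lipschitz on the compact range $[p_-,p_+]$; this is precisely what the bounded prefactor above encodes. The reverse implication requires no new work: applying the forward direction to $\cpp$ and using $(\cpp)'=\pp$ together with $p'_-=\ (p_+)'$, $p'_+=(p_-)'$ (so $\cpp$ also satisfies $1<(p')_-\leq(p')_+<\infty$) returns $\pp\in LH_\infty(\R^n)$. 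I expect the only place demanding care is keeping the constants' dependence transparent, but since everything reduces to the single identity for the reciprocal differences and a compactness argument bounding the prefactor, no serious obstacle should arise.
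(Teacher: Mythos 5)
Your proof is correct and is essentially the argument the paper has in mind: the paper gives no proof of Lemma \ref{ConjExpLHinfty} at all, calling it ``an immediate consequence of the definition'' and citing \cite{VLS}, and your computation---the identity $\frac{1}{p'(x)}-\frac{1}{p'_\infty}=\frac{p(x)-p_\infty}{p(x)p_\infty}$ combined with the bounds $1<p_-\leq p(x),p_\infty\leq p_+<\infty$ to control the prefactor, plus symmetry via $(\cpp)'=\pp$---is precisely that immediate consequence written out. The only superfluous part is the ``subtlety'' you raise about identifying the limiting value: the definition \eqref{LHinfty} merely requires the existence of \emph{some} constant $p_\infty$ for which the estimate holds (no uniqueness or convergence claim is made), so once you verify the estimate for $\cpp$ with the candidate $p'_\infty=(p_\infty)'$, there is nothing further to justify.
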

\begin{lemma}\label{EpsLH0ConjExpEquiv}
$\pp \in \epsilon LH_0(\R^n)$ if and only if $\cpp \in \epsilon LH_0(\R^n)$. 
\end{lemma}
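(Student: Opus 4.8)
The plan is to reduce the $\epsilon$-Diening condition for $\cpp$ to the one for $\pp$ by comparing, on each dyadic cube $Q$, the oscillation of the two exponents. First I would record how $p_\pm(Q)$ transform under conjugation. Since $p'(x) = p(x)/(p(x)-1) = 1 + 1/(p(x)-1)$ is a strictly decreasing function of $p(x)$ on $(1,\infty)$, the essential supremum of $\cpp$ over $Q$ is attained where $\pp$ is smallest and vice versa; that is, $(p')_+(Q) = (p_-(Q))'$ and $(p')_-(Q) = (p_+(Q))'$, where the order of the extrema reverses.

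The key step is the resulting algebraic identity. Writing $a = p_-(Q)$ and $b = p_+(Q)$, a direct computation gives
\[(p')_-(Q) - (p')_+(Q) = \frac{b}{b-1} - \frac{a}{a-1} = \frac{a - b}{(a-1)(b-1)} = \frac{p_-(Q)-p_+(Q)}{(p_-(Q)-1)(p_+(Q)-1)}.\]
Because $1 < p_- \leq p_-(Q) \leq p_+(Q) \leq p_+ < \infty$, the denominator satisfies $(p_--1)^2 \leq (p_-(Q)-1)(p_+(Q)-1) \leq (p_+-1)^2$, so the conjugate gap $(p')_-(Q) - (p')_+(Q)$ is a nonpositive quantity comparable, with constants depending only on $p_-$ and $p_+$, to the original gap $p_-(Q) - p_+(Q)$.

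It then remains to turn this comparison into the stated equivalence. Set $t = |Q|/\epsilon_Q$. If $t \geq 1$ the $\epsilon$-Diening condition holds automatically for any exponent with $p_- \leq p_+$, since $t$ is raised to a nonpositive power; so it suffices to treat $t < 1$, where $t^{-s}$ is increasing in $s \geq 0$. Assuming $\pp \in \epsilon LH_0(\R^n)$ with constant $C$, monotonicity together with the upper comparison $|(p')_-(Q)-(p')_+(Q)| \leq (p_--1)^{-2}\,|p_-(Q)-p_+(Q)|$ yields
\[t^{(p')_-(Q)-(p')_+(Q)} \leq \left(t^{p_-(Q)-p_+(Q)}\right)^{1/(p_--1)^2} \leq C^{1/(p_--1)^2},\]
which is exactly the $\epsilon$-Diening condition for $\cpp$. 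Finally, since $(\cpp)' = \pp$, the reverse implication follows by applying the same argument with $\cpp$ in place of $\pp$, giving the "if and only if."

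I do not expect a serious obstacle: the whole content is the identity in the second step. The only points needing care are tracking that conjugation reverses the roles of the essential supremum and infimum, and verifying that the hypothesis $1 < p_- \leq p_+ < \infty$ keeps $(p_-(Q)-1)(p_+(Q)-1)$ bounded away from $0$ and $\infty$ uniformly in $Q$ — this uniform two-sided bound is precisely what makes the comparison constants, and hence the final $\epsilon$-Diening constant, independent of $Q$.
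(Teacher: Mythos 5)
Your proof is correct, and it follows essentially the same route as the paper: both arguments reduce the $\epsilon$-Diening condition for $\cpp$ to the one for $\pp$ via the exact identity
\[
(p')_+(Q)-(p')_-(Q)=\frac{(p')_+(Q)\,(p')_-(Q)}{p_-(Q)\,p_+(Q)}\bigl(p_+(Q)-p_-(Q)\bigr)=\frac{p_+(Q)-p_-(Q)}{(p_-(Q)-1)(p_+(Q)-1)},
\]
dispose of the case $|Q|/\epsilon_Q>1$ trivially, and conclude by monotonicity of the power when $|Q|/\epsilon_Q\le 1$. There is, however, one substantive difference, and it favors your write-up: you run the comparison in the direction the argument actually needs. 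Since the hypothesis bounds $(\epsilon_Q/|Q|)^{p_+(Q)-p_-(Q)}$ from above and the base $\epsilon_Q/|Q|$ is at least $1$, what is required is an \emph{upper} bound on the conjugate oscillation, $(p')_+(Q)-(p')_-(Q)\le (p_--1)^{-2}\bigl(p_+(Q)-p_-(Q)\bigr)$, which is exactly what you prove. The paper instead asserts that the \emph{lower} bound \eqref{ConjExp:ineq1} suffices and then ``flips both sides''; that step is not valid, because $\left(|Q|/\epsilon_Q\right)^{(p')_-(Q)-(p')_+(Q)}$ and $\left(\epsilon_Q/|Q|\right)^{(p')_+(Q)-(p')_-(Q)}$ are the \emph{same} quantity, not reciprocals of one another, so no reversal of the inequality occurs and the lower bound on the oscillation yields only a (useless) lower bound on the quantity to be controlled. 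Bounding the coefficient in the identity from above instead of below --- by $((p')_+)^2/(p_-)^2$, which is precisely your constant $(p_--1)^{-2}$ --- repairs the paper's proof and recovers your argument verbatim. Your treatment of the converse via $(\cpp)'=\pp$ matches the paper's appeal to symmetry.
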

\begin{proof}
Assume $\pp \in \epsilon LH_0(\R^n)$. Let $Q \in \D$. Observe that since $(p')_-(Q) - (p')_+(Q) \leq 0$, we have that if $|Q|/\epsilon_Q >1$, then inequality \eqref{EpsLH0} holds with $C=1$. Suppose that $(|Q|/\epsilon_Q)\leq 1$. To show $\cpp \in \epsilon LH_0(\R^n)$, it suffices to show that there is a constant $C_1>0$ depending only on $\pp$ such that for any $Q \in \D$, we have
\begin{align}
(p')_+(Q) - (p')_-(Q) \geq C_1 (p_+(Q)-p_-(Q)).\label{ConjExp:ineq1}
\end{align}
To see why, observe that since $|Q|/\epsilon_Q \leq 1$ and $\pp \in \epsilon LH_0(\R^n)$, if \eqref{ConjExp:ineq1} holds, then we have
\[ \left( \frac{\epsilon_Q}{|Q|}\right)^{(p')_+(Q)-(p')_-(Q)} \geq \left( \frac{\epsilon_Q}{|Q|}\right)^{C_1(p_+(Q)-p_-(Q))}.\]
Flipping both sides, we have that
\[\left( \frac{|Q|}{\epsilon_Q}\right)^{(p')_-(Q) - (p')_+(Q))} \leq \left( \frac{|Q|}{\epsilon_Q}\right)^{C_1(p_-(Q)-p_+(Q))}.\]
Since $\pp \in \epsilon LH_0(\R^n)$, the right hand side is bounded by a constant depending only on $n$ and $\pp$. Hence, $\cpp \in \epsilon LH_0(\R^n)$.

We now prove that inequality \eqref{ConjExp:ineq1} holds. First, recall that by the definition of conjugate exponent functions, we have
\[\frac{1}{(p')_+(Q)} = 1-\frac{1}{p_-(Q)}, \hskip.5cm \text{ and } \hskip.5cm \frac{1}{(p')_-(Q)} = 1 - \frac{1}{p_+(Q)}.\]
Using these properties, we have that
\begin{align*}
(p')_+(Q) - (p')_-(Q) & = (p')_+(Q)(p')_-(Q)\left[ \frac{1}{(p')_-(Q)} - \frac{1}{(p')_+(Q)}\right]\\
	& = (p')_+(Q)(p')_-(Q) \left[ \frac{1}{p_-(Q)} - \frac{1}{p_+(Q)}\right]\\
	& = \frac{(p')_+(Q)(p')_-(Q)}{p_-(Q)p_+(Q)}[ p_+(Q) - p_-(Q)]\\
	& \geq \frac{((p')_-)^2}{(p_+)^2}[p_+(Q) - p_-(Q)]
\end{align*}
This proves inequality \eqref{ConjExp:ineq1}, and so $\cpp \in \epsilon LH_0(\R^n)$. The proof of the converse is the same, except we interchange the roles of $\pp$ and $\cpp$.
\end{proof}

The following lemma allows us to apply the previous two lemmas in our estimates when proving Theorem \ref{HaarBound} and Theorem \ref{HaarCompact}.
\begin{lemma}\cite[Proposition 2.37]{VLS}\label{AssocNormEquiv}
Given $\pp \in \Pp(\Omega)$ with $1 <p_-\leq p_+ <\infty$, define the associate norm $\|\cdot \|_\pp'$ by
\[\|f\|_\pp' = \sup\left\{\int_{\Omega} f(x)g(x)dx : g \in \Lcpp(\Omega), \|g\|_{\cpp}\leq 1\right\}.\]
Then for any $f\in \Lpp(\Omega)$, we have
\[\|f\|_\pp \leq  \|f\|_\pp'.\]
\end{lemma}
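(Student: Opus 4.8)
The plan is to prove this reverse-Hölder inequality by constructing, for each nonzero $f$, an explicit test function in the unit ball of $\Lcpp(\Omega)$ that exactly attains the norm $\|f\|_\pp$. Since $\|f\|_\pp = \||f|\|_\pp$ and, after replacing an admissible $g$ by $|g|\,\mathrm{sgn}(f)$ (which preserves $\|g\|_\cpp$), also $\|f\|_\pp' = \||f|\|_\pp'$, I would first reduce to the case $f \geq 0$; both functionals are positively homogeneous, so I may further normalize $\|f\|_\pp = 1$. The case $f = 0$ is trivial.

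The key normalization is that, because $p_+ < \infty$, the condition $\|f\|_\pp = 1$ forces the modular to equal $1$, that is, $\rho_{\pp}(f) = \int_\Omega f(x)^{p(x)}\,dx = 1$. The bound $\rho_{\pp}(f) \leq 1$ is immediate from Proposition \ref{ModNorm:ineq1}, while the matching lower bound follows from the standard norm--modular relation valid for $p_+ < \infty$ (if $\rho_{\pp}(f) < 1$ then $\|f\|_\pp < 1$), which I would cite from \cite{VLS}.

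The heart of the argument is the explicit choice $g(x) = f(x)^{p(x)-1}$, which is nonnegative and well defined since $f \geq 0$ and $p(x) - 1 \geq p_- - 1 > 0$. The only identity needed is the pointwise relation $(p(x)-1)p'(x) = p(x)$, which is immediate from $p'(x) = p(x)/(p(x)-1)$. Using it, the conjugate modular of $g$ is
\[
\rho_{\cpp}(g) = \int_\Omega f(x)^{(p(x)-1)p'(x)}\,dx = \int_\Omega f(x)^{p(x)}\,dx = \rho_{\pp}(f) = 1,
\]
so $g \in \Lcpp(\Omega)$ by Proposition \ref{ModNormEquiv} applied to $p'$ (note $(p')_+ = (p_-)' < \infty$ since $p_- > 1$), and $\|g\|_\cpp \leq 1$ by the same norm--modular relation. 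Simultaneously,
\[
\int_\Omega f(x)g(x)\,dx = \int_\Omega f(x)^{p(x)}\,dx = \rho_{\pp}(f) = 1 = \|f\|_\pp.
\]
Taking the supremum defining $\|f\|_\pp'$ over this single admissible $g$ already yields $\|f\|_\pp' \geq 1 = \|f\|_\pp$, which is the desired inequality.

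I expect the only real obstacle to be the seemingly minor step that $\|f\|_\pp = 1$ gives $\rho_{\pp}(f) = 1$ exactly, not merely $\rho_{\pp}(f) \leq 1$: it is this equality that places $g$ precisely on the boundary of the unit ball of $\Lcpp(\Omega)$ and lets the two computations above match the norm with no loss. This is exactly where the hypothesis $p_+ < \infty$ enters, and the statement would require a correction factor without it. Measurability and integrability of $g$ are then automatic once $\rho_{\cpp}(g) = 1 < \infty$ is established.
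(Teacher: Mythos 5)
Your proof is correct, but there is no in-paper argument to compare it against: the paper does not prove this lemma at all, it imports it verbatim from \cite[Proposition 2.37]{VLS}. What you have written is the classical extremal-function proof of the lower bound in the norm-duality formula, and it is valid under the paper's standing hypotheses $1<p_-\leq p_+<\infty$, both of which you genuinely use: $p_->1$ makes $g=f^{\pp-1}$ well defined and gives $(p')_+=(p_-)'<\infty$ so that Proposition \ref{ModNormEquiv} applies to $\cpp$, while $p_+<\infty$ is exactly what upgrades $\rho_{\pp}(f)\leq 1$ (from Proposition \ref{ModNorm:ineq1}) to the equality $\rho_{\pp}(f)=1$ that your test function needs. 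That last step, which you correctly flag as the crux, is indeed a standard fact in \cite{VLS}, and it also has a one-line proof: if $\rho_{\pp}(f)<1$, then for $\mu>1$ sufficiently close to $1$ one has $\rho_{\pp}(\mu f)\leq \mu^{p_+}\rho_{\pp}(f)\leq 1$, whence $\|f\|_{\pp}\leq 1/\mu<1$, contradicting the normalization. The reduction to $f\geq 0$ via $g\mapsto |g|\,\mathrm{sgn}(f)$ is also sound, since the modular of $g$ depends only on $|g|$. The one thing worth noting is that the result cited from \cite{VLS} holds for general exponent functions (allowing $p_-=1$ or $p_+=\infty$), where your construction breaks down---the norm-one/modular-one equivalence can fail and $f^{\pp-1}$ need not be admissible---and the proof there requires a more careful truncation argument; under the restricted hypotheses stated in this lemma, your shorter and more elementary argument is perfectly adequate.
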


The next lemma is the variable exponent version of Holder's inequality. 
\begin{lemma}\cite[Theorem 2.26]{VLS}\label{HolderIneq}
Given $\pp \in \Pp(\Omega)$ with $1< p_- \leq p_+ < \infty$, for all $f \in \Lpp(\Omega)$ and $g\in \Lcpp(\Omega)$, $fg\in L^1(\Omega)$ and 
\[\int_{\Omega} |f(x) g(x) |dx \leq 2 \|f\|_\pp \|g\|_\cpp.\]
\end{lemma}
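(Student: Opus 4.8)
The plan is to prove the inequality by applying Young's inequality pointwise in the exponent $p(x)$ and then integrating, converting the resulting integrals into modulars that are controlled by Proposition~\ref{ModNorm:ineq1}. First I would reduce to a normalized case. Since both $\|f\|_\pp$ and $\|g\|_\cpp$ are finite (as $f\in\Lpp(\Omega)$ and $g\in\Lcpp(\Omega)$), and since the claimed inequality is trivial when either norm is zero (the corresponding function then vanishes almost everywhere, so $fg=0$ a.e.), I may assume both norms are strictly positive and set $F=|f|/\|f\|_\pp$ and $G=|g|/\|g\|_\cpp$. By the positive homogeneity of the norm, $\|F\|_\pp=1$ and $\|G\|_\cpp=1$.

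The core estimate is the pointwise Young inequality. Because $p_->1$ and $p_+<\infty$, we have $1<p(x)<\infty$ and $1<p'(x)<\infty$ for almost every $x$, so the conjugacy relation $1/p(x)+1/p'(x)=1$ applies a.e. and gives
\[F(x)G(x)\leq \frac{F(x)^{p(x)}}{p(x)}+\frac{G(x)^{p'(x)}}{p'(x)}\leq F(x)^{p(x)}+G(x)^{p'(x)},\]
where the last step uses $1/p(x)\leq 1$ and $1/p'(x)\leq 1$; this is precisely where passing to the conjugate pair lets me avoid any exponent-dependent constant. Integrating over $\Omega$ and recognizing the two integrals as modulars,
\[\int_\Omega F(x)G(x)\,dx\leq \rho_\pp(F)+\rho_\cpp(G).\]
Since $\|F\|_\pp=1$ and $\|G\|_\cpp=1$, Proposition~\ref{ModNorm:ineq1} yields $\rho_\pp(F)\leq\|F\|_\pp=1$ and $\rho_\cpp(G)\leq\|G\|_\cpp=1$, so the right-hand side is at most $2$.

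Unwinding the normalization, this gives $\int_\Omega |f(x)g(x)|\,dx\leq 2\|f\|_\pp\|g\|_\cpp$, which in particular shows the integral is finite and hence $fg\in L^1(\Omega)$. I expect no serious obstacle: the proof is short, and the only points requiring care are the reduction to strictly positive norms (with the trivial cases handled separately), the verification that $\|F\|_\pp=1$ rather than merely $\leq 1$ so that Proposition~\ref{ModNorm:ineq1} furnishes the modular bound $\rho_\pp(F)\leq 1$, and the elementary observation that dropping the factors $1/p(x),1/p'(x)\leq 1$ produces exactly the constant $2$. The hypothesis $1<p_-\leq p_+<\infty$ is what guarantees both that the modular is well defined and that the pointwise Young step is valid almost everywhere, since $p_->1$ forces $(p')_+<\infty$ and $p_+<\infty$ forces $(p')_->1$.
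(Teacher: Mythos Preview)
Your proof is correct and is the standard argument: normalize, apply pointwise Young's inequality with variable exponents, integrate to get modulars, and bound each modular by $1$ via Proposition~\ref{ModNorm:ineq1}. The paper itself does not supply a proof of this lemma; it simply cites \cite[Theorem~2.26]{VLS}, and the argument there is essentially the one you have given.
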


%%%%%%%%%%%%%%
%Epsilon Maximal operator
%%%%%%%%%%%%%%%%%

\section{Boundedness of the $\epsilon$-Maximal Operator}
\label{sec:ProofEpsMaxOpsBdd}
The proof of Theorem \ref{EpsMaxOpBdd} is adapted from \cite[Theorem 3.16]{VLS}. We begin the proof by making some reductions. We may assume $f$ is nonegative since $\eM (f) = \eM(|f|)$. By homogeneity, we may further assume that $\|f\|_\pp = 1$. From Proposition \ref{ModNorm:ineq1}, we get that $\rho (f) \leq 1$. Decompose $f$ as $f_1 + f_2$, where
\[f_1 = f\chi_{\{x : f(x) >1\}}, \hskip1.5cm f_2 = f\chi_{\{x : f(x) \leq 1\}}.\]
Then $\rho(f_i) \leq \|f_i\|_\pp \leq 1$ for $i=1,2$. Further, since $\eM f \leq \eM f_1 + \eM f_2$, it will suffice to show for $i =1,2$ that $\|\eM f_i\|_{\pp} \leq C(n, \pp, \epsilon)$. Since $p_+<\infty$, by Proposition \ref{ModNormEquiv} it will in turn suffice to show that for $i=1,2$,
\[ \rho(\eM f_i) = \int_{\R^n} \eM f_i (x)^{p(x)} dx \leq C(n, \pp,\epsilon).\]

First we consider the estimate for $f_1$. Let $A=2^n$. For each $k \in \Z$, define
\[\Omega_k = \{x\in \R^n : \eM f_1(x) >A^k\}.\]

Observe that, up to a set of measure zero, $\R^n = \bigcup_{k\in \Z} \Omega_k \backslash \Omega_{k+1}$. Since $p_+<\infty$, by Lemma \ref{AvgDecay}, $f$ satisfies the hypotheses of Lemma \ref{CZ}. Thus, for each $k$ we may form a collection of pairwise disjoint cubes $\{Q_j^k\}_j$ such that \eqref{CZdecomp} and \eqref{CZbounds} hold. For each $k$, define the sets $E_j^k = Q_j^k \cap (\Omega_k \backslash \Omega_{k+1})$. Then for each $k$, $\{E_j^k\}_j$ forms a pairwise disjoint collection such that $\Omega_k\bs \Omega_{k+1} = \bigcup_j E_j^k$. 

Now observe that
\begin{align*}
\rho(\eM f_1) &= \sum_{k} \int_{\Omega_k \bs \Omega_{k+1}} \eM f_1(x)^{p(x)} dx\\
	& \leq \sum_{k} \int_{\Omega_k \bs\Omega_{k+1}} (A^{k+1})^{p(x)} dx\\
	& \leq A^{p_+} \sum_{k,j} \int_{E_j^k} \left( \epsilon_{Q_j^k} \dashint_{Q_j^k} f_1(y) dy\right)^{p(x)} dx.
\end{align*}

For each $k$ and $j$, define $p_{jk} = p_-(Q_j^k)$. Since for any $x \in \R^n$, $f_1(x) >1$ or $f_1(x)=0$, we then have
\begin{align}\label{f1Case:ineq1}
 \int_{Q_j^k} f_1(y)dy \leq \int_{Q_j^k}f_1(y)^{p(y)/p_{jk}}dy \leq \int_{Q_j^k} f_1(y)^{p(y)} dy \leq 1.
\end{align}
Now observe that by Proposition \ref{EpsLH0Penrod}, inequality \eqref{f1Case:ineq1}, and Holder's inequality we have
\begin{align*}
\sum_{k,j} \int_{E_j^k}\left( \epsilon_{Q_j^k} \dashint_{Q_j^k} f_1(y) dy\right)^{p(x)}dx= & \sum_{k,j} \int_{E_j^k} \left( \frac{\epsilon_{Q_j^k}}{|Q_j^k|}\right)^{p(x)} \left( \int_{Q_j^k} f_1(y) dy\right)^{p(x)} dx\\
	\lesssim & \sum_{k,j} \int_{E_j^k} \left( \frac{\epsilon_{Q_j^k}}{|Q_j^k|}\right)^{p_jk} \left( \int_{Q_j^k} f_1(y) dy\right)^{p(x)} dx\\
	\lesssim & (1+\|\epsilon\|_\infty)^{p_{+}} \sum_{k,j} \int_{E_j^k} |Q_j^k|^{-p_{jk}} \left( \int_{Q_j^k} f_1(y) dy\right)^{p(x)} dx\\
	\lesssim &(1+\|\epsilon\|_\infty)^{p_+} \sum_{k,j} \int_{E_j^k} |Q_j^k|^{-p_{jk}} \left( \int_{Q_j^k} f_1(y)^{p(y)/p_{jk}} dy\right)^{p(x)} dx\\
	\lesssim &(1+\|\epsilon\|_\infty)^{p_+} \sum_{k,j} \int_{E_j^k} \left( |Q_j^k|^{-1} \int_{Q_j^k} f_1(y)^{p(y)/p_{jk}}\right)^{p_{jk}} dx\\
	\lesssim &(1+\|\epsilon\|_\infty)^{p_+} \sum_{k,j} \int_{E_j^k}  \left( \dashint_{Q_j^k} f_1(y)^{p(y)/p_-}dy\right)^{p_-} dx\\
	\lesssim &(1+ \|\epsilon\|_\infty)^{p_+} \sum_{k,j} \int_{E_j^k} M^d[ f_1^{\pp/p_-}](x)^{p_-} dx\\
	= & C(\pp, \epsilon) \int_{\R^n} M^d[f_1^{\pp/p_-}](x)^{p_-} dx.\\
\end{align*}
Since $p_->1$, we have $\|M^d f_1\|_{L^{p_-}(\R^n)} \leq (p_-)' \|f_1\|_{L^{p_-}(\R^n)}$ (see \cite[Theorem 2.3]{MoenSharpBound}, \cite[Exercise 2.1.12]{GrafakosBook}). Combining this with the fact that $\rho(f_1) \leq 1$, we have that 
\[\rho(\eM f_1) \leq C(n,\pp, \epsilon) \rho(f_1) \leq C(n,\pp, \epsilon).\]

To estimate $\rho(\eM f_2)$ observe that since $f_2 \leq 1$, we have $\dashint_Q f_2(y) dy \leq 1$ for all $Q \in \D$. Thus, 
\[ \frac{\epsilon_Q}{\|\epsilon\|_{\infty}} \dashint_Q f_2(y) dy \chi_Q(x) \leq 1,\]
for all $x\in \R^n$. Hence, $0 \leq\|\epsilon\|_{\infty}^{-1} \eM f_2 \leq 1$. Let $R(x) = (e+|x|)^{-n}$. Note that since $p_->1$, we have $p_\infty >1$, and so $\int_{\R^n}M^d f_2(x)^{p_\infty}dx \leq( (p_\infty)')^{p_\infty}\int_{\R^n} f(x)^{p_\infty} dx$. Combining this with inequalities \eqref{LHinftyLemma:ineq1}, \eqref{LHinftyLemma:ineq2}, and the pointwise bound $\eM f_2(x) \leq \|\epsilon\|_\infty M^d f_2(x)$, we have that
\begin{align*}
\int_{\R^n} \eM f_2(x)^{p(x)} dx & \leq \|\epsilon\|_{\infty}^{p_+} \int_{\R^n} [\|\epsilon\|_{\infty}^{-1} \eM f_2(x)]^{p(x)} dx \\
	& \leq C\|\epsilon\|_{\infty}^{p_+} \int_{\R^n} [\|\epsilon\|_{\infty}^{-1} \eM f_2(x)]^{p_\infty} dx + \|\epsilon\|_{\infty}^{p_+} \int_{\R^n} R(x)^{p_-} dx\\
	& = C \|\epsilon\|_{\infty}^{p_+ - p_\infty} \int_{\R^n} \eM f_2(x)^{p_\infty} dx + \|\epsilon\|_{\infty}^{p_+} \int_{\R^n} R(x)^{p_-} dx\\
	& \leq C\|\epsilon\|_{\infty}^{p_+-p_\infty} \int_{\R^n} \|\epsilon\|_{\infty}^{p_\infty} M^df_2(x)^{p_\infty} + \|\epsilon\|_{\infty}^{p_+} \int_{\R^n} R(x)^{p_-} dx\\
	& \leq C ((p_\infty)')^{p_\infty} \|\epsilon\|_{\infty}^{p_+} \int_{\R^n} f_2(x)^{p_\infty} dx + \|\epsilon\|_{\infty}^{p_+} \int_{\R^n} R(x)^{p_-} dx\\
	& \leq C(n, \pp, \epsilon) \int_{\R^n} f_2(x)^{p(x)} dx + C(n,\pp,\epsilon)\int_{\R^n} R(x)^{p_-} dx.
\end{align*}

Since $\rho(f_2) \leq 1$ and $\int_{\R^n} R(x)^{p_-}$ is finite, we have that $\int_{\R^n} \eM f_2(x)^{p(x)}\leq C(n,\pp, \epsilon)$. This completes the proof of Theorem \ref{EpsMaxOpBdd}.
\bigskip

We will need a local version of Theorem \ref{EpsMaxOpBdd} to prove Theorem \ref{HaarCompact}. We state the local version and outline the modifications to the proof. Note that the necessary lemmas and propositions used to prove Theorem \ref{EpsMaxOpBdd} still hold when replacing $\R^n$ with $Q_0$. 
\begin{lemma}\label{LocalEpsMaxOpBdd}
Given $Q_0 \in \D$, $\epsilon =  \{\epsilon_Q\}_{Q\in \D(Q_0)}$, if $\pp \in \epsilon LH_0(Q_0)$ with $1 < p_- \leq p_+ < \infty$, then there exists a constant $C= C(n,\pp, \epsilon, Q_0)$ such that
\[\|\eM f\|_{\pp} \leq C\|f\|_{\pp},\]
for all $f \in \Lpp(Q_0)$.
\end{lemma}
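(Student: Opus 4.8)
The plan is to mirror the proof of Theorem \ref{EpsMaxOpBdd}, making the same reductions: assume $f \geq 0$ and $\|f\|_\pp = 1$ (so $\rho(f) \leq 1$ by Proposition \ref{ModNorm:ineq1}), split $f = f_1 + f_2$ with $f_1 = f\chi_{\{f>1\}}$ and $f_2 = f\chi_{\{f\leq 1\}}$, and reduce via Propositions \ref{ModNormEquiv} and \ref{ModNorm:ineq1} to proving $\rho(\eM f_i) \leq C(n,\pp,\epsilon,Q_0)$ for $i=1,2$. The conceptual point is that, because $|Q_0|<\infty$, the role previously played by the $LH_\infty(\R^n)$ hypothesis disappears entirely: in Theorem \ref{EpsMaxOpBdd} that condition entered only through the estimate for $f_2$ and through the tail terms $\int R(x)^{p_-}\,dx$, and both are trivial on a set of finite measure.

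For $f_2$ I would simply observe that $\eM f_2 \leq \|\epsilon\|_\infty M^d f_2 \leq \|\epsilon\|_\infty$, since $0 \leq f_2 \leq 1$ forces $M^d f_2 \leq 1$. Thus $\eM f_2$ is uniformly bounded and
\[
\rho(\eM f_2) = \int_{Q_0} \eM f_2(x)^{p(x)}\,dx \leq |Q_0|\,\max\{\|\epsilon\|_\infty^{p_-},\,\|\epsilon\|_\infty^{p_+}\} = C(\pp,\epsilon,Q_0),
\]
with no appeal to Lemma \ref{LHinftyLemma} and no local regularity needed.

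For $f_1$ the argument is closer to the original, but one must account for the fact that the local decomposition Lemma \ref{LocalCZ} is available only for levels $\lambda > \lambda_0 := \epsilon_{Q_0}\dashint_{Q_0} f_1$. Setting $A = 2^n$ and $\Omega_k = \{x \in Q_0 : \eM f_1(x) > A^k\}$, I would split $\rho(\eM f_1) = \sum_k \int_{\Omega_k \setminus \Omega_{k+1}} \eM f_1(x)^{p(x)}\,dx$ into the low levels $A^k \leq \lambda_0$ and the high levels $A^k > \lambda_0$. On the union of the low levels one has $\eM f_1 \leq A\lambda_0$ pointwise; moreover, since $f_1 > 1$ on its support and $p(\cdot) \geq p_- > 1$, we get $\dashint_{Q_0} f_1 \leq |Q_0|^{-1}\rho(f_1) \leq |Q_0|^{-1}$, whence $\lambda_0 \leq \|\epsilon\|_\infty |Q_0|^{-1}$ is bounded. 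Therefore the low-level contribution is at most $\int_{Q_0}(A\lambda_0)^{p(x)}\,dx \leq |Q_0|\max\{(A\lambda_0)^{p_-},(A\lambda_0)^{p_+}\} = C(n,\pp,\epsilon,Q_0)$. For the high levels Lemma \ref{LocalCZ} applies, producing disjoint cubes $\{Q_j^k\}\subseteq \D(Q_0)$, and the chain of estimates from the proof of Theorem \ref{EpsMaxOpBdd} carries over verbatim: the local form of Proposition \ref{EpsLH0Penrod} removes the variable exponent from $(\epsilon_{Q_j^k}/|Q_j^k|)^{p(x)}$, inequality \eqref{f1Case:ineq1} together with Jensen's inequality passes from $p_-(Q_j^k)$ to $p_-$, and the boundedness of $M^d$ on $L^{p_-}(Q_0)$ gives the bound $\lesssim \rho(f_1) \leq 1$.

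The main obstacle, and the only genuinely new ingredient relative to Section \ref{sec:ProofEpsMaxOpsBdd}, is precisely this treatment of the bottom levels: in the global setting Lemma \ref{CZ} decomposes every level set, whereas here Lemma \ref{LocalCZ} fails for small $\lambda$ because $Q_0$ is the maximal available cube and has no dyadic parent inside $Q_0$. The resolution is the observation that the normalization $\rho(f_1) \leq 1$ forces $\lambda_0$ to be controlled by $|Q_0|^{-1}$ and $\|\epsilon\|_\infty$, so that the undecomposable part contributes only a finite constant. Everything else is a routine localization of estimates already established, which justifies merely outlining the modifications rather than reproducing the full computation.
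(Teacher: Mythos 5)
Your proof is correct and follows essentially the same route as the paper's: the $f_2$ term is handled trivially using $|Q_0|<\infty$, and for $f_1$ the level sets are split at the threshold $\lambda_0 = \epsilon_{Q_0}\dashint_{Q_0} f_1$ below which Lemma \ref{LocalCZ} is unavailable (the paper phrases this via the integer $k_0$ and the set $\Omega = Q_0 \setminus \Omega_{k_0}$), with the high levels estimated exactly as in Theorem \ref{EpsMaxOpBdd}. Your explicit verification that $\lambda_0 \leq \|\epsilon\|_\infty |Q_0|^{-1}$, which makes the low-level bound uniform over the unit ball of $\Lpp(Q_0)$, is a detail the paper leaves implicit, and including it strengthens the write-up.
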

\begin{proof}
Using the same reductions as in the proof of Theorem \ref{EpsMaxOpBdd}, we must show that $\rho(\eM f_i) \leq C$ for $i=1,2$. Since $|Q_0|$ is finite and $\eM f_2 \leq 1$, we immediately have that 
\[\rho(\eM f_2) \leq |Q_0|.\]

To estimate $\rho(\eM f_1)$, let $A=2^n$ and $k_0$ be the smallest integer such that 
\[2^{k_0} > \epsilon_{Q_0} \dashint_{Q_0} |f_1(y)| dy.\]
For $k \geq k_0$, define $\Omega_k$ the same as in Theorem \ref{EpsMaxOpBdd}, and define $\Omega = Q_0 \bk \Omega_{k_0}$. Then $Q_0 = \Omega \cup \left( \bigcup_{k \geq k_0} \Omega_k \bk \Omega_{k+1}\right)$. We now write $\rho(\eM f_1)$ as
\[ \rho(\eM f_1) = \int_{\Omega} \eM f_1(x)^{p(x)} dx + \sum_{k\geq k_0} \int_{\Omega_k \bk \Omega_{k+1}} \eM f_1(x)^{p(x)} dx.\]
The second term is estimated using $\epsilon LH_0(Q_0)$, inequality \eqref{f1Case:ineq1}, and Holder's inequality the same way as in Theorem \ref{EpsMaxOpBdd}. To estimate the first term, observe that for $x \in \Omega$, we have $\eM f_1(x) \leq A^{k_0}$. Thus 
\[\int_\Omega \eM f_1(x)^{p(x)} dx \leq A^{k_0 p_+} |Q_0|.\]
This completes the proof.
\end{proof}

%%%%%%%%%%%%%%%%%%%%%%%%%%%%%%%%%%%%
% Haar Multipliers
%%%%%%%%%%%%%%%%%%%%

\section{Haar Multipliers}
\label{sec:HaarMult}

To prove the Haar multiplier defined in \eqref{def:HaarMult} is bounded on $L^p(w)$, in \cite{SVW2021} they proved it was dominated by a sparse operator. To state their result, we need two definitions. 

\begin{definition}\label{SparseCollectionDef}
We say $\calS\subset \D$ is a sparse collection of dyadic cubes if for every $Q \in \calS$,
\[ \sum_{P \in \eta_\calS(Q)} |P| \leq \frac{1}{2}|Q|,\]
where $\eta_\calS (Q)$ is the set of maximal elements of $\calS$ that are strictly contained in $Q$. 
\end{definition}
\begin{remark}\label{SparsenessEquiv}
Given a collection of cubes $\calS$, for each $Q\in \calS$, define $E_Q = \left(\bigcup_{P \in \eta_\calS(Q)} P\right)^c\cap Q$. Then Definition \ref{SparseCollectionDef} is equivalent to the condition that $|Q| \leq 2|E_Q|$, for all $Q \in \calS$. Furthermore, $\{E_Q\}_{Q\in \calS}$ is a pairwise disjoint collection. 
\end{remark}

\begin{definition}\label{SparseOp}
Given a sparse collection $\calS$ and $\epsilon= \{\epsilon_Q\}_{Q\in \D}$, for all $f\in L^1_{\text{loc}}(\R^n)$ and $x\in \R^n$, define the $\epsilon$-sparse operator $S_\epsilon$ by
\[ S_\epsilon f(x) = \sum_{Q \in \calS} \epsilon_Q \dashint_Q f(y)dy \; \chi_Q(x),\]
\end{definition}

The following theorem allows us to reduce the proofs about $T_\epsilon$ to proofs about $\epsilon$-sparse operators.
\begin{theorem}\cite[Theorem 1.2]{SVW2021}\label{PointwiseSparseBound}
Given $\epsilon = \{\epsilon_Q\}_{Q\in \D}$, if $f$ is bounded with compact support, then there exists a sparse collection $\calS$ such that the associated $\epsilon$-sparse operator $S_\epsilon$ satisfies
\[|T_\epsilon f(x)| \lesssim S_\epsilon |f|(x)\]
for almost every $x\in \supp (f)$.
\end{theorem}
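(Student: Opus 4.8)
The plan is to prove the pointwise estimate by a Calder\'on--Zygmund stopping-time construction combined with the martingale structure of the modified Haar functions. First I would reduce to the case where $f \geq 0$ is bounded with compact support: every coefficient satisfies $|\langle f, h_Q\rangle| \leq |Q|^{1/2}\bigl(\dashint_Q |f| + \dashint_{\widehat{Q}}|f|\bigr)$, so all the estimates below involve only averages of $|f|$, and compact support guarantees $\langle f\rangle_R \to 0$ as $|R|\to\infty$ (giving convergence of the scale-wise sums). Fixing $A = 2^{n+1}$, I would build $\calS$ by the usual recursion: the top cubes are the maximal $Q \in \D$ with $\dashint_Q|f| > A^{k}$, $k\in\Z$, and from each selected $S$ the next generation consists of the maximal $S' \subsetneq S$ with $\dashint_{S'}|f| > A\,\dashint_S|f|$. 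The stopping inequality forces $\sum_{S'}|S'| < A^{-1}|S| \leq \tfrac12|S|$, so $\calS$ is sparse in the sense of Definition \ref{SparseCollectionDef}, and every \emph{transit} cube $Q$ (one whose smallest containing stopping cube is $S$) satisfies $\dashint_Q|f| \lesssim \dashint_S|f|$.

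Next I would expand $T_\epsilon f(x)$ scale by scale. Writing $\langle f\rangle_E = \dashint_E f$, a direct computation gives, for a dyadic cube $R$ and a child $Q$ of $R$, the identity $\langle f, h_Q\rangle h_Q(x) = (\langle f\rangle_Q - \langle f\rangle_R)(\chi_Q(x) - 2^{-n})$, and summing the \emph{unweighted} version over all $2^n$ children reproduces the martingale difference $\langle f\rangle_{Q_x} - \langle f\rangle_R$, where $Q_x$ is the child containing $x$. Grouping $T_\epsilon f(x) = \sum_{R\ni x} L_R(x)$ with $L_R(x) = \sum_{\widehat{Q}=R}\epsilon_Q\langle f, h_Q\rangle h_Q(x)$, and using $\sum_{\widehat{Q}=R}(\langle f\rangle_Q - \langle f\rangle_R) = 0$, each layer splits as
\[
L_R(x) = \epsilon_{Q_x}\bigl(\langle f\rangle_{Q_x} - \langle f\rangle_R\bigr) - 2^{-n}\sum_{\widehat{Q}=R}(\epsilon_Q - \epsilon_R)\bigl(\langle f\rangle_Q - \langle f\rangle_R\bigr).
\]
The first, ``diagonal,'' term telescopes along the chain of cubes containing $x$; the second, ``off-diagonal,'' term measures the covariation of $\epsilon_Q$ and $\langle f\rangle_Q$ among the children and vanishes identically when $\epsilon$ is constant on a layer. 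Organizing the scales $R\ni x$ into the stopping trees of $\calS$, the averages obey $\langle f\rangle_R \lesssim \langle f\rangle_S$ on the tree rooted at $S$, while the domination property gives $\epsilon_Q \leq \epsilon_R \leq \epsilon_S$ throughout.

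For the diagonal part I would use Abel summation inside each tree: since the consecutive differences $\epsilon_R - \epsilon_{R'}$ are nonnegative and telescope to at most $\epsilon_S$ along the branch through $x$, the diagonal sum over the tree of $S$ collapses to $\lesssim \epsilon_S \langle f\rangle_S$, and summing over the nested $S\in\calS$ with $S\ni x$ yields $S_\epsilon|f|(x)$. The main obstacle is the off-diagonal term. A crude triangle-inequality bound costs a factor $\sim\epsilon_R\langle f\rangle_R$ at \emph{every} scale and hence a divergent factor equal to the number of scales in the tree; worse, even after telescoping one cannot pass to absolute values, since the remaining total variation of the averages along a branch is not controlled by any sparse quantity. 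The crux is therefore to retain the cancellation across scales. In dimension one this is transparent: the layer sum simplifies to $\tfrac12(\langle f\rangle_{Q_x} - \langle f\rangle_{\mathrm{sib}})(\epsilon_{Q_x} - \epsilon_{\mathrm{sib}})$, and the entire series collapses to $\sum_j \overline{\epsilon}_j(\langle f\rangle_{P_{j+1}} - \langle f\rangle_{P_j})$, where $P_{j}$ is the cube of side $2^{-j}$ containing $x$ and $\overline{\epsilon}_j$ is the child-average of $\epsilon$; this is Abel-summable against the monotone sequence $\epsilon_{P_j}$, and the monotonicity supplied by the domination property is exactly what makes the resulting variation sum telescope to $\lesssim\epsilon_S$ on each tree. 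Carrying this cancellation through in $\R^n$, where the covariation term does not factor as cleanly, is the technical heart of the argument; once it is done, assembling the diagonal and off-diagonal estimates over all $S\in\calS$ containing a Lebesgue point $x$ gives $|T_\epsilon f(x)| \lesssim S_\epsilon|f|(x)$ for a.e.\ $x\in\supp f$.
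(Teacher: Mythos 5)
First, note that the paper does not prove this statement at all: it is quoted verbatim from \cite[Theorem 1.2]{SVW2021} and used as a black box. So your proposal can only be judged on its own merits, and it has a gap that is not merely the ``technical heart'' you defer --- the approach itself fails.

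The fatal problem is your choice of $\calS$: you build the sparse family by stopping only on the growth of averages of $|f|$, but for that family the claimed pointwise bound is \emph{false}, so no amount of cancellation bookkeeping can finish the proof. Here is a one-dimensional counterexample, using exactly your own identity $T_\epsilon f(x)=\sum_j \overline{\epsilon}_j\bigl(\langle f\rangle_{P_{j+1}}-\langle f\rangle_{P_j}\bigr)$, where $P_j=[0,2^{-j})$ and $\overline{\epsilon}_j$ is the average of the two children's coefficients. Fix $J$ large, $\delta>0$, and define $f\geq 0$ on $[0,1)$ by prescribing $\langle f\rangle_{P_j}=2\delta$ for even $j$ and $\delta$ for odd $j$, $0\leq j\leq J$; this forces $f$ to equal $3\delta$ or $0$ on the sibling intervals $S_{j+1}=[2^{-j-1},2^{-j})$ and $f\equiv\langle f\rangle_{P_J}$ on $P_J$, so $0\leq f\leq 3\delta$ with compact support. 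Set $\epsilon_{P_j}=1$ for all $j$, $\epsilon_{S_{j+1}}=1$ when $j$ is odd and $\epsilon_{S_{j+1}}=0$ when $j$ is even (descendants inherit these values, all other cubes get $1$); the domination property holds. Then each pair of consecutive scales contributes $\tfrac12(-\delta)+1\cdot(+\delta)=\tfrac{\delta}{2}$, so $|T_\epsilon f|\gtrsim J\delta$ on $P_J$. On the other hand, every dyadic cube $Q$ containing a point of $P_J$ satisfies $\epsilon_Q\dashint_Q|f|\,dy\leq 2\delta$, and your stopping family never selects any chain cube $P_j$ with $j\geq1$ (its average never exceeds $A$ times the average of the selected cube containing it), so the selected cubes containing $x\in P_J$ have geometrically decaying averages and $S_\epsilon|f|\lesssim\delta$ there. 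Taking $J\to\infty$ kills the estimate. The same example shows your one-dimensional Abel-summation claim is wrong: $\overline{\epsilon}_j$ is \emph{not} monotone and has unbounded variation along the branch even though $\epsilon_{P_j}$ is monotone; the domination property controls the coefficients along the chain but says nothing about the oscillation of the sibling coefficients, and it is precisely their correlation with the oscillation of the averages that drives the example.

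This does not contradict the theorem, because the sparse collection is allowed to depend on $\epsilon$: in the example the full chain $\{P_j\}_{j=0}^{J}$ is itself sparse in the sense of Definition \ref{SparseCollectionDef} (each $P_{j+1}$ is the unique maximal selected subcube of $P_j$, of exactly half its measure) and it does dominate $|T_\epsilon f|$. The missing idea, and the one underlying the proof in \cite{SVW2021}, is that the stopping condition must see the operator, not just $|f|$: one selects, inside a chosen cube $P$, the maximal dyadic subcubes of a set of the form
\[
\Bigl\{x\in P: M^d(f\chi_P)(x)>A\dashint_P|f|\,dy\Bigr\}\;\cup\;\Bigl\{x\in P: |T_\epsilon(f\chi_P)(x)|>A\,\epsilon_P\dashint_P|f|\,dy\Bigr\},
\]
and here the domination property enters in an entirely different way than in your sketch: all coefficients of the multiplier localized to $P$ are $\leq\epsilon_P$, so the localized operator has weak-$(1,1)$ norm $\lesssim\epsilon_P$, which makes this exceptional set have measure $\leq\tfrac12|P|$ for $A$ large and yields the bound $A\,\epsilon_P\dashint_P|f|\,dy$ off of it. Because $T_\epsilon$ itself appears in the stopping rule, the selected cubes track exactly where the partial sums of $T_\epsilon f$ grow --- the chain cubes in the counterexample --- which is what any construction based solely on averages of $|f|$ cannot detect.
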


We now prove Theorem \ref{HaarBound}.
\begin{proof}
We will first prove that $\|T_\epsilon f\|_\pp \lesssim \|f\|_\pp$ for any $f\in L^\infty_c(\R^n)$. Fix such an $f$. We need only show that for the sparse collection $\calS$ from Theorem \ref{PointwiseSparseBound}, the associated $\epsilon$-sparse operator $S_\epsilon$ satisfies 
\begin{align}\label{EpsSparseOpBdd}
\|S_\epsilon f\|_\pp \lesssim \|f\|_\pp.
\end{align}

By Lemma \ref{AssocNormEquiv}, there exists $g\in \Lcpp(\R^n)$ with $\|g\|_\cpp \leq 1$ such that
\[\|S_\epsilon f\|_\pp \leq \|S_\epsilon f\|_\pp' \leq 2 \int_{\R^n} S_\epsilon f(x) g(x) dx.\]
By Remark \ref{SparsenessEquiv} and Lemma \ref{HolderIneq}, we have that
\begin{align*}
 \int_{\R^n} S_\epsilon f(x) g(x) dx	& = \sum_{Q\in \calS} \epsilon_Q\dashint_Q f(y) dy \int_Q g(x) dx\\
%	& = \sum_{Q \in \calS} \epsilon_Q \dashint_Q f(y) dy \dashint_Q g(x) dx |Q|\\
	& \leq 2\sum_{Q\in \calS} \sqrt{\epsilon_Q }\dashint_Q f(y) dy \sqrt{\epsilon_Q}\dashint_Q g(x) dx |E_Q| \\
	& \leq 2 \sum_{Q\in \calS} \int_{E_Q} M_{\sqrt{\epsilon}}f(t) M_{\sqrt{\epsilon}}g(t)dt\\
	& \leq 2 \int_{\R^n}  M_{\sqrt{\epsilon}}f(t) M_{\sqrt{\epsilon}}g(t) dt \\
	& \leq 4 \|M_{\sqrt{\epsilon}}f\|_{\pp} \|M_{\sqrt{\epsilon}}g\|_{\cpp}.\\
\end{align*}

Since $\pp \in \sqrt{\epsilon}LH_0(\R^n)$, by Lemma \ref{EpsLH0ConjExpEquiv} we have $\cpp \in \sqrt{\epsilon}LH_0(\R^n)$. By Lemma \ref{ConjExpLHinfty}, since $\pp \in LH_\infty(\R^n)$, we have $\cpp \in LH_\infty(\R^n)$. Hence, by Theorem \ref{EpsMaxOpBdd}, we have 
\[\|M_{\sqrt{\epsilon}}f\|_{\pp} \|M_{\sqrt{\epsilon}}g\|_{\cpp} \leq C \|f\|_\pp \|g\|_\cpp \leq C\|f\|_{\pp}.\]
Hence, $\|T_\epsilon f\|_\pp \lesssim \|f\|_\pp$ for $f\in L^\infty_c(\R^n)$. 

Now given any $f\in \Lpp(\R^n)$, since $L^\infty_c(\R^n)$ is dense in $\Lpp(\R^n)$ (See \cite[Theorem 2.72]{VLS}) and $T_\epsilon$ is linear, the desired inequality follows by a standard approximation argument.
\end{proof}

We now prove Theorem \ref{HaarCompact}. 
\begin{proof}
It suffices to show that for any sparse collection $\calS \subset \D(Q_0)$, the associated $\epsilon$-sparse operator $S_\epsilon$ is compact on $\Lpp(Q_0)$. Fix $\calS \subset \D(Q_0)$. For each $N \in \N$, define the set $D_N$ by
\[D_N = \{Q\in \D(Q_0) : 2^{-N} \leq \ell(Q) \leq 2^N\},\]
and define the operator $S_{\epsilon,N}$ by 
\[S_{\epsilon,N} f = \sum_{Q\in D_N \cap \calS} \epsilon_Q \dashint_Q f(y) dy \; \chi_Q,\]
for $f\in L^\pp(Q_0)$. Since $Q_0$ is bounded, $D_N$ is a finite collection for all $N$. Hence $S_{\epsilon,N}$ is a finite rank operator for all $N$. We will show that $S_{\epsilon,N}$ converges to $S_\epsilon$ in operator norm, i.e. $S_{\epsilon,N}f \to S_\epsilon f$ uniformly for all $f$ in the unit ball of $\Lpp(Q_0)$. Fix such an $f$. Observe that 
\[S_\epsilon f - S_{\epsilon,N} f = \sum_{Q \in D_N^c \cap \calS} \epsilon_Q \dashint_Q f(y) dy \; \chi_Q.\]
By Lemma \ref{AssocNormEquiv} we may choose a $g\in \Lcpp(Q_0)$ with $\|g\|_{\cpp}\leq 1$ such that 
\[\left\|\sum_{Q \in D_N^c \cap \calS} \epsilon_Q \dashint_Q f(y) dy\; \chi_Q\right\|_\pp \leq 2 \int_{Q_0} \left(\sum_{Q \in D_n^c \cap \calS} \epsilon_Q \dashint_Q f(y) dy\; \chi_Q(x) \right) g(x) dx.\]
We next use Remark \ref{SparsenessEquiv} in the same way as in the proof of Theorem \ref{HaarBound}, but we split $\epsilon$ into one factor of $\epsilon_Q^{1-2\alpha}$ and two factors of $\epsilon^\alpha$ before using Lemma \ref{HolderIneq}. This gives 
\begin{align*}
\int_{Q_0} \sum_{D_N^c \cap \calS} \epsilon_Q \dashint_Q f(y) dy \; \chi_Q(x)  g(x) dx & \leq 2\sum_{Q\in D_n^c \cap \calS} \epsilon_Q \int_{E_Q} \dashint_Q f(y) dy \dashint_Q g(x) dx dz \\
	& \leq 2 \sup_{Q\in D_N^c} \epsilon^{1-2\alpha} \sum_{Q\in D_N^c\cap \calS} \int_{E_Q} M_{\epsilon^\alpha} f(z) M_{\epsilon^\alpha} g(z) dz\\
	& \leq 2 \sup_{Q\in D_N^c}  \epsilon^{1-2\alpha}\int_{Q_0} M_{\epsilon^\alpha} f(z) M_{\epsilon^\alpha} g(z) dz\\
	& \leq 4 \sup_{Q\in D_N^c}  \epsilon^{1-2\alpha} \|M_{\epsilon^\alpha} f\|_{\pp} \|M_{\epsilon^\alpha} g\|_\cpp\\
\end{align*}

Since $\pp \in \epsilon^\alpha LH_0(Q_0)$, by Lemma \ref{EpsLH0ConjExpEquiv} we have that $\cpp \in \epsilon^\alpha LH_0(Q_0)$. Thus, by Lemma \ref{LocalEpsMaxOpBdd}, we have that
\[ \|M_{\epsilon^\alpha} f\|_{\pp} \|M_{\epsilon^\alpha} g\|_\cpp \leq \|f\|_\pp \|g\|_\cpp \leq 1,\]
and so we need only show that $\sup_{Q\in \D_N^c} \epsilon_Q^{1-2\alpha} \to 0$ as $N\to \infty$. 

Choose $N_0$ such that $2^{N_0} = \ell(Q_0)$. Then for all $N\geq N_0$, there are no cubes $Q\in \D(Q_0)$ such that $\ell(Q) >2^N$. Hence $D_N^c = \{Q \in \D(Q_0) : \ell(Q)<2^{-N}\}$. Since we assume $\lim_{N\to \infty} \sup\{\epsilon_Q : \ell(Q) < 2^{-N}\} = 0$, we have that $\sup_{Q\in D_N^c} \epsilon_Q^{1-2\alpha} \to 0$ as $N \to \infty$. Thus, $S_{\epsilon, N} \to S_\epsilon$, and so $S_\epsilon$ is a limit of finite rank operators. Hence, $S_\epsilon$ is compact on $\Lpp(Q_0)$ (See \cite[p. 174]{ConwayFuncAnal}). Consequently, Theorem \ref{PointwiseSparseBound} gives us that the Haar multiplier $T_\epsilon$ is also compact on $\Lpp(Q_0)$. 

\end{proof}

%%%%%%%%%%%%%%%%%%%%%%%%%%%%%%
% Examples
%%%%%%%%%%%%%%%%%%%%%

\section{Examples}
\label{sec:Examples}

In this section, we give sufficient conditions on the collection $\epsilon$ so that a specific exponent function $\pp$ is not locally log-Holder continuous, but is in $\epsilon LH_0(\R)$. Let $0<a<1$ and define 
\begin{align}
p(x) = \begin{cases} 2 & x \leq 0\\
						2+(\log_2 \frac{2}{x})^{-a} & 0 < x < 1\\
						3 & x \geq 1\end{cases}.
\end{align}

This exponent function is not in $LH_0(\R)$, as proved in \cite[Section 4.4]{VLS}. We next define the conditions on $\epsilon$ for which $\pp \in \epsilon LH_0(\R)$. Let $C \geq 1$. For each $n \in \Z$ with $n\geq 0$, let $Q_n = [0,2^{-n})$ and $Q_n' = [2^{-n-1}, 2^{-n})$. Let $\epsilon = \{\epsilon_Q\}_{Q\in \D}$ be any collection satisfying
\begin{align}
 \epsilon_{Q_n}&  \leq 2^{-n}C^{(n+1)^a} \text{ for all } n \geq 0, \label{OriginCubes}\\
\epsilon_{Q_n'} &= \epsilon_{Q_n} \text{ for all } n \geq 0 ,\label{NotOriginCubes}\\
\epsilon_Q & =C \text{ for all other cubes}\; Q
\end{align}

For such a collection $\epsilon$, we have $\pp \in \epsilon LH_0(\R)$. Let $n\geq 0$. First observe that since $(\log_2 (2/x))^{-a}$ is an increasing function, it attains its infimum at the left endpoint, and its supremum an the right endpoint of any cube. Consequently, we have
\begin{align}
p_-(Q_n) - p_+(Q_n)& = -(n+1)^{-a} \label{Qn}\\
p_-(Q_n') - p_+(Q_n') &= (n+2)^{-a} - (n+1)^{-a} \label{Qn'}
\end{align}

For the cube $Q_n$, by inequalities \eqref{Qn} and \eqref{OriginCubes}, we have
\begin{align*}
\left( \frac{|Q_n|}{\epsilon_{Q_n}}\right)^{p_-(Q_n) - p_+(Q_n)} = \left( \frac{2^{-n}}{\epsilon_{Q_n}} \right)^{-(n+1)^{-a}} = (2^n \epsilon_{Q_n})^{(n+1)^{-a}}
	\leq (C^{(n+1)^a})^{(n+1)^{-a}} = C.
\end{align*}

For the cube $Q_n'$, by \eqref{NotOriginCubes} and inequality \eqref{Qn'}, we have
\begin{multline*}
\left( \frac{|Q_n'|}{\epsilon_{Q_n'}} \right)^{p_-(Q_n')-p_+(Q_n')} = \left(\frac{2^{-n-1}}{\epsilon_{Q_n}}\right)^{(n+2)^{-a} - (n+1)^{-a}} = ( 2^{n+1}\epsilon_{Q_n})^{(n+1)^{-a}-(n+2)^{-a}}\\
	\leq (2C^{(n+1)^a})^{(n+1)^{-a} - (n+2)^{-a}} = 2^{(n+1)^{-a} - (n+2)^{-a}} C^{1-\left(\frac{n+2}{n+1}\right)^{-a}}
\end{multline*}

This expression is bounded, since both $2^{(n+1)^{-a}-(n+2)^{-a}}$ and $C^{1-\left( \frac{n+2}{n+1}\right)^{-a}}$ are decreasing and converge to $1$ as $n\to \infty$. Consequently, we obtain the upper bound at $n=0$, which is $(2C)^{1-2^{-a}}$. 

Now consider cubes of the form $Q=[0,2^k)$ for any $k \in \N$. Then
\begin{align*}
\left(\frac{|Q|}{\epsilon_Q}\right)^{p_-(Q)-p_+(Q)}  = \left(\frac{2^k}{C}\right)^{2-3} = \frac{C}{2^k} \leq \frac{C}{2}.
\end{align*}

Lastly, consider any cube $Q$ that is neither $Q_n$ nor $Q_n'$ for any $n \geq 0$ and is not of the form $[0,2^k)$ for $k \in \N$. These cubes do not intersect $[0,1)$ and so $\pp$ is constant on these cubes. Hence $p_-(Q) -p_+(Q)=0$ and so 
\[\left( \frac{|Q|}{\epsilon_Q}\right)^{p_-(Q)-p_+(Q)} = 1.\]
Thus, $\pp\in \epsilon LH_0(\R)$.

%\nocite{*}
\bibliography{Bibliography.bib}{}

\begin{thebibliography}{1}

\bibitem{ConwayFuncAnal}
J.~B. Conway.
\newblock {\em A course in functional analysis}, volume~96 of {\em Graduate
  Texts in Mathematics}.
\newblock Springer-Verlag, New York, second edition, 1990.

\bibitem{VLS}
D.~Cruz-Uribe and A.~Fiorenza.
\newblock {\em Variable {L}ebesgue spaces}.
\newblock Applied and Numerical Harmonic Analysis. Birkh\"{a}user/Springer,
  Heidelberg, 2013.
\newblock Foundations and harmonic analysis.

\bibitem{GrafakosBook}
L.~Grafakos.
\newblock {\em Classical {F}ourier analysis}, volume 249 of {\em Graduate Texts
  in Mathematics}.
\newblock Springer, New York, third edition, 2014.

\bibitem{ExtrapComp}
T.~Hyt\"{o}nen and S.~Lappas.
\newblock Extrapolation of compactness on weighted spaces: bilinear operators.
\newblock {\em Indag. Math. (N.S.)}, 33(2):397--420, 2022.

\bibitem{JiaoMartingales}
Y.~Jiao, D.~Zhou, Z.~Hao, and W.~Chen.
\newblock Martingale {H}ardy spaces with variable exponents.
\newblock {\em Banach J. Math. Anal.}, 10(4):750--770, 2016.

\bibitem{MoenSharpBound}
K.~Moen.
\newblock Sharp weighted bounds without testing or extrapolation.
\newblock {\em Arch. Math. (Basel)}, 99(5):457--466, 2012.

\bibitem{SVW2021}
C.~B. Stockdale, P.~Villarroya, and B.~D. Wick.
\newblock Sparse domination results for compactness on weighted spaces.
\newblock {\em Collect. Math.}, 73(3):535--563, 2022.

\bibitem{WeiszDyadicBoundCondition}
F.~Weisz.
\newblock Boundedness of dyadic maximal operators on variable {L}ebesgue
  spaces.
\newblock {\em Adv. Oper. Theory}, 5(4):1588--1598, 2020.

\end{thebibliography}
\bibliographystyle{plain}

\end{document}